\date{}
\newtheorem{thm}{Theorem}[section]
 \newtheorem{prop}{Proposition}[section]
\newtheorem{lemme}{Lemma}[section]
\newtheorem{cor}{Corollary}[section]
 \newcommand{\twosystem}[2]{\left\{\begin{aligned} &#1\\ &#2\end{aligned}\right.}
\newcommand{\nero}{\smallskip$\bullet\quad$\rm}
\newcommand{\scal}[2]{\langle{#1},{#2}\rangle}
\newcommand{\dotp}[2]{\langle{#1},{#2}\rangle}
\newcommand{\Cal }[1]{{\mathcal {#1}}}
\newcommand{\abs}[1]{\lvert{#1}\rvert}
\newcommand{\norm}[1]{\lVert{#1}\rVert}
\newcommand{\sphere}[1]{{\mathbb S}^{#1}}
\newcommand{\real}[1]{{\mathbb R}^{#1}}
\newcommand{\bd}{\partial}
\newcommand{\derive}[2]{\dfrac{\bd #1}{\bd#2}}
\newcommand{\pd}[2]{\dfrac{\bd #1}{\bd#2}}
\newcommand{\R}{\mathbb R}
\newcommand{\N}{\mathbb N}
\newcommand{\Sp}{\mathbb S}
\newcommand{\dr}{\frac{\partial}{\partial r}}
\begin{document}

\title[Eigenvalues of the Laplacian on a manifold with density]
{Eigenvalues of the  Laplacian on a compact manifold with density}

\author{Bruno Colbois}
\address{ Universit\'e de Neuch\^atel, Laboratoire de
Math\'ematiques, 13 rue E. Argand, 2007 Neuch\^atel, Switzerland.}
\email{Bruno.Colbois@unine.ch}

\author{Ahmad El Soufi}
\address{Universit\'e François Rabelais de Tours, CNRS, Laboratoire de Math\'ematiques
et Physique Th\'eorique, UMR 7350, Parc de Grandmont, 37200
Tours, France.} \email{elsoufi@univ-tours.fr}

\author{Alessandro Savo}
\address{Dipartimento di Metodi e Modelli Matematici, Sapienza Universita di Roma,
Via Antonio Scarpa 16, 00161 Roma, Italy} \email{savo@dmmm.uniroma1.it}

\thanks{}

\begin{abstract}
In this paper, we study the spectrum of the  weighted Laplacian (also called Bakry-Emery or Witten Laplacian) $L_\sigma$ on a compact, connected, smooth Riemannian manifold $(M,g)$ endowed with a measure $\sigma dv_g$. First, we obtain upper bounds for the $k-$th eigenvalue  of $L_{\sigma}$ which are consistent with the power of $k$ in Weyl's formula. These bounds depend on  integral norms of the density $\sigma$, and in the second part of the article, we give examples showing that this dependence is, in some sense, sharp. As a corollary, we get  bounds for the eigenvalues of Laplace type operators, such as the Schr\"{o}dinger operator or the Hodge Laplacian on $p-$forms. In the special case of the  weighted Laplacian on the sphere, we get a sharp inequality for the first nonzero eigenvalue which extends Hersch's inequality.

\end{abstract}

\subjclass[2010]{58J50, 35P15, 47A75}
\keywords{Manifold with density, Weighted Laplacian, Witten Laplacian, Eigenvalue,
Upper bound}

\maketitle

\section{Introduction}
In this article, our main aim is to study the spectrum of the  weighted Laplacian (also called Bakry-Emery Laplacian) $L_\sigma$ on a compact, connected, smooth Riemannian manifold $(M,g)$ endowed with a measure $\sigma dv_g$, where $\sigma=e^{-f} \in C^2(M)$ is a positive density and $dv_g$ is the Riemannian measure induced by the  metric $g$. Such a triple  $(M,g,\sigma)$   is known in literature as a weighted Riemannian manifold, a manifold with density, a smooth metric measure space or a Bakry-Emery manifold. Denoting by $\nabla^g$ and $\Delta_g$ the gradient and the Laplacian with respect to the metric $g$,  the operator $L_\sigma$ is defined by 
$$
L_{\sigma} = \Delta_g  -\frac{1}{\sigma} \nabla^g \sigma\cdot \nabla^g =\Delta_g +\nabla^g f\cdot\nabla^g 
$$
so that, for any  function $u\in C^2(M)$, satisfying 
Neumann boundary conditions if $\partial M \not = \emptyset$, 
$$
\int_M \vert \nabla^g u\vert ^2\sigma dv_g= \int_M u\ L_{\sigma}u \ \sigma dv_g.
$$

Note that $L_{\sigma}$ is self-adjoint as an operator on $L^2(\sigma dv_g)$ and is unitarily equivalent (through the transform ${\sqrt{\sigma}}: L^2(\sigma dv_g)\to L^2(dv_g)$)  to the  Schrödinger operator $H_\sigma=\Delta_g +\frac 14 \vert \nabla^gf\vert^2 +\frac12 \Delta_g f$, which is nothing but the restriction to functions of the  Witten Laplacian associated to $f$. That is why $L_{\sigma}$ itself is sometimes called  {\it Witten Laplacian}. 

\smallskip

Weighted manifolds arise naturally in several situations in the context of  geometric analysis and  their study has been very active in recent years. Their Bakry-Emery curvature $\mbox{Ric}_{\sigma}=\mbox{Ric}_g +\mbox{Hess} f$  plays a role which is similar in many respects to that played by the Ricci curvature for Riemannian manifolds, and appears as a centerpiece  in the analysis of singularities of the Ricci flow in Perelman's work (see \cite{Morgan1, Morgan2}). The weighted Laplacian $L_\sigma$  appears naturally  in the study of diffusion processes (see e.g., the pioneering work of Bakry and Emery \cite{BakryEmery}). Eigenvalues of  $L_\sigma$  are strongly related to asymptotic properties of mm-spaces, such as the study of Levy families (see \cite{FunanoShioya, GromovMilman, Milman}). 
Without being exhaustive, we refer to the following articles and the references therein: \cite{ Lott1, Lott2, LottVillani, MunteanuWang1, MunteanuWang3, MunteanuWang2, WeiWylie}  and, closely related to our topic, \cite{AndrewsNi, FutakiLiLi, FutakiSano, asma2, LuRowlett, MaDu, Setti, Wu1, Wu2}

\smallskip

The spectrum of $L_{\sigma}$, with Neumann boundary conditions if $\partial M\ne \emptyset$, consists of an unbounded sequence of eigenvalues
$$
\text{Spec}(L_{\sigma}) = \{ 0 = \lambda_1(L_{\sigma}) < \lambda_2(L_{\sigma}) \leq \lambda_3(L_{\sigma}) \leq \cdots \leq \lambda_k(L_{\sigma}) \leq \cdots \}
$$
which satisfies the Weyl's asymptotic formula
$$\lambda_k(L_{\sigma})  \sim 4\pi^2\omega_n^{-\frac 2 n} \left(\frac k{V_g(M)} \right)^{\frac 2 n}, \;\; \; \; \text{ as} \; k\to\infty$$
where $V_g(M)$ is the Riemannian volume of $(M,g)$ and ${\omega_n}$ is the volume of the unit ball in $\R^n$. 
The first aim of this paper is to obtain bounds for $\lambda_k(L_{\sigma})$  which are consistent with the power of $k$ in Weyl's formula.

\smallskip

Before stating our results, let us recall some known facts about the eigenvalues  
$(\lambda_k(g))_{k\ge1}$ of the usual Laplacian $\Delta_g$ (case $\sigma=1$). Firstly, the well-known Hersch's isoperimetric inequality (see \cite{Hersch}) asserts that on the $2$-dimensional sphere $\mathbb S^2$, the first positive normalized eigenvalue $\lambda_2(g)V_g(M) $ is maximal when $g$ is a ``round" metric (see \cite{EGJ, EIR, JNP, LY, N} for similar results on other surfaces). Korevaar \cite{Korevaar} proved that  on any compact manifold of dimension $n=2$, $\lambda_k(g)V_g(M)$ is bounded above independently of $g$.  More precisely,  if $M$ is a compact orientable surface of genus $\gamma$, then
\begin{eqnarray} \label{dim2}
\lambda_k(g)V_g(M) \le C(\gamma+1)k
\end{eqnarray}
where $C$ is an absolute constant (see  \cite{asma1} for an improved version of this inequality). 
 On the other hand, on any compact manifold $M$ of dimension $n\ge 3$, the normalized first positive eigenvalue $\lambda_2(g) V_g(M)^{2/n}$ can be made arbitrarily large 
 when $g$ runs over the set of all Riemannian metrics on $M$ (see \cite{CD, Lohkamp}). However,  the situation changes  as soon as we restrict ourselves to a fixed conformal class of metrics. Indeed, on the sphere $\mathbb S^n$, round metrics maximize  $\lambda_2(g)V_g(\mathbb S^n) ^{2/n}$ among all metrics $g$ which are conformally equivalent to the standard one (see \cite[Proposition 3.1]{EI}). Furthermore,  Korevaar  \cite{Korevaar}  proved  that for any compact Riemannian manifold $(M,g)$ one has 
\begin{eqnarray} \label{conforme}
\lambda_k(g)V_g(M)^{2/n} \le C([g])k^{2/n}
\end{eqnarray}
where $C([g])$ is a constant depending only on the conformal class $[g]$ of the metric $g$.  Korevaar's approach has been revisited and placed in the context of metric  measure spaces by Grigor'yan and Yau \cite{GY} and, then, by Grigor'yan, Netrusov and Yau \cite{GNY}.

\smallskip

The first observation we can make about possible extensions of these results to weighted Laplacians is that, given any compact Riemannian manifold $(M,g)$, the eigenvalues $\lambda_k(L_\sigma)$ cannot be bounded above independently of $\sigma$.  Indeed, from the semi-classical analysis of the Witten Laplacian (see \cite{HS}), we can easily deduce that (Proposition \ref{HJ}) if $f$ is any smooth Morse function on $M$ with $m_0$ stable critical points, then the family of densities $\sigma _{\varepsilon} = e^{-f/\varepsilon}$ satisfies for $k> m_0$,
$$\lambda_k(L_{\sigma_{\varepsilon}})\underset{\varepsilon\to 0}\longrightarrow +\infty .$$
Therefore, any extension of the  inequalities \eqref{dim2} and \eqref{conforme}  to $L_\sigma$  must necessarily have a density dependence in the right-hand side. The following theorem gives such an extension in which  the upper bound depends on the ratio between the $L^{\frac{n}{n-2}}$-norm and the $L^1$-norm of $\sigma$. In all the sequel, the  $L^p$ norm of $\sigma $ with respect to $dv_g$ will be denoted by $
\Vert \sigma \Vert_p^{g} $.

\begin{thm} \label{mainthm1} Let $(M,g,\sigma)$ be a compact weighted Riemannian manifold. The eigenvalues of the operator $L_\sigma$, with Neumann boundary conditions if   $\partial M\ne \emptyset$, satisfy : 

\smallskip
\noindent (I) If $n \ge 3$, then, $\forall k\ge 1$,
$$\lambda_k(L_{\sigma}) \le C([g])  \frac{\Vert \sigma\Vert_{\frac{n}{n-2}}^{g}}{\Vert \sigma\Vert_1^{g}} \ k^{2/n}
$$
where $C([g]) $ is a constant depending only on the conformal class of $g$. 

\smallskip
\noindent (II) if $n=2$ and $M$ is orientable of genus $\gamma$, then, $\forall k\ge 1$,
$$
\lambda_k(L_{\sigma}) \le C  \frac{\Vert \sigma\Vert_{\infty}}{\Vert \sigma\Vert_1^{g}}(\gamma +1) \ k
$$
where $C$ is an absolute constant.
\end{thm}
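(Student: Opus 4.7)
The plan is to adapt the Korevaar--Grigor'yan--Netrusov--Yau capacitor method \cite{Korevaar, GY, GNY} to the weighted setting, exploiting the conformal invariance of the $n$-Dirichlet energy $\int_M |\nabla^g u|^n \, dv_g$ when $n \ge 3$, and of the Dirichlet energy itself when $n = 2$.

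For part (I), I would first apply the Grigor'yan--Netrusov--Yau decomposition theorem to the Radon measure $d\mu = \sigma \, dv_g$ on $(M,g)$. This yields $2k$ capacitors $(A_i, A_i^*)$, with $A_i \subset A_i^*$ and the outer annuli $A_i^*$ pairwise disjoint, satisfying $\mu(A_i) \ge c \Vert \sigma\Vert_1^g / k$. Each capacitor carries a Lipschitz cutoff $\phi_i$, supported in $A_i^*$ and equal to $1$ on $A_i$, with $n$-Dirichlet energy bounded by a constant depending only on the conformal class of $g$:
$$\int_M |\nabla^g \phi_i|^n \, dv_g \le C([g]).$$

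I would then apply H\"older's inequality on each outer annulus with exponents $n/2$ and $n/(n-2)$:
$$\int_{A_i^*} |\nabla^g \phi_i|^2 \sigma \, dv_g \le \left(\int_{A_i^*} |\nabla^g \phi_i|^n \, dv_g\right)^{2/n} \left(\int_{A_i^*} \sigma^{\frac{n}{n-2}} \, dv_g\right)^{\frac{n-2}{n}}.$$
Since the outer annuli are pairwise disjoint, $\sum_i \int_{A_i^*} \sigma^{\frac{n}{n-2}} \, dv_g \le \bigl(\Vert \sigma\Vert_{\frac{n}{n-2}}^g\bigr)^{\frac{n}{n-2}}$, so by pigeonhole at least $k$ of the $2k$ indices $i$ satisfy
$$\int_{A_i^*} \sigma^{\frac{n}{n-2}} \, dv_g \le \frac{2}{k} \bigl(\Vert \sigma\Vert_{\frac{n}{n-2}}^g\bigr)^{\frac{n}{n-2}},$$
and for these preferred indices,
$$\int_{A_i^*} |\nabla^g \phi_i|^2 \sigma \, dv_g \le C'([g]) \, k^{-(n-2)/n} \, \Vert \sigma\Vert_{\frac{n}{n-2}}^g.$$
Combined with the trivial lower bound $\int_M \phi_i^2 \sigma \, dv_g \ge \mu(A_i) \ge c \Vert\sigma\Vert_1^g / k$, this bounds the Rayleigh quotient of $\phi_i$ by $C''([g]) \, \bigl(\Vert\sigma\Vert_{\frac{n}{n-2}}^g / \Vert\sigma\Vert_1^g\bigr) k^{2/n}$. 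As the $\phi_i$ have pairwise disjoint supports, the min-max principle applied to this $k$-dimensional family of test functions delivers the desired inequality for $\lambda_k(L_\sigma)$ (up to a harmless index shift absorbed into the constant).

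Part (II) follows the same strategy in dimension $2$: the Korevaar--GNY decomposition on a compact orientable surface of genus $\gamma$ produces $2k$ disjoint capacitors whose cutoffs satisfy $\int_M |\nabla^g \phi_i|^2 \, dv_g \le C(\gamma+1)$, the factor $(\gamma+1)$ reflecting the conformal structure of the surface. In dimension $2$, $\int |\nabla^g u|^2 \, dv_g$ is itself conformally invariant, so no H\"older step is needed: the elementary estimate $\int_{A_i^*} |\nabla^g \phi_i|^2 \sigma \, dv_g \le \Vert \sigma\Vert_\infty \int_M |\nabla^g \phi_i|^2 \, dv_g$, together with $\int_M \phi_i^2 \sigma \, dv_g \ge c \Vert\sigma\Vert_1^g / k$ and the min-max principle, yields the claimed bound.

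The main obstacle is securing the GNY decomposition in the precise form required, with simultaneous $\mu$-mass lower bounds, pairwise disjoint outer annuli, and a purely conformal $n$-capacity bound for the cutoffs. The pigeonhole step that redistributes the $L^{n/(n-2)}$-mass of $\sigma$ across the disjoint outer annuli is the crucial move that converts a naive linear $k$-dependence into the Weyl-consistent exponent $k^{2/n}$.
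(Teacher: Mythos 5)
Your proposal is correct and follows essentially the same route as the paper: the Grigor'yan--Netrusov--Yau annuli construction applied to the measure $\sigma\,dv_g$, the H\"older step exploiting the conformal invariance of the $n$-Dirichlet energy, the pigeonhole redistribution of $\int\sigma^{n/(n-2)}$ over the $2k$ disjoint outer annuli, and the min--max principle; in dimension $2$ the genus factor likewise enters through the conformal branched cover of $\sphere 2$ and the $\norm{\sigma}_\infty$ bound replaces H\"older. The only cosmetic difference is that the paper fixes a reference metric $g_0\in[g]$ and works with its distance explicitly (so that the capacity and covering constants are manifestly conformal invariants, controlled by dimension when $[g]$ contains a nonnegative-Ricci metric), which your argument assumes implicitly.
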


It is clear that taking $\sigma =1$ in Theorem \ref{mainthm1}, we recover the inequalities \eqref{dim2} and \eqref{conforme}.  Moreover, as we will see in the next section, if $M$ is boundaryless and the conformal class $[g]$ contains a metric $g_0$ with nonnegative  Ricci curvature, then  $C([g])\le C(n)$, where  $C(n)$ is a constant which depends only on the dimension.

\smallskip

Notice that there already exist upper bounds for the eigenvalues of $L_{\sigma}$ in the literature, but they usually depend on derivatives of $\sigma$, either directly or indirectly, through the Bakry-Emery curvature. The main feature of our result is that the upper bounds  we obtain depend only on $L^p$-norms of the density.
The proof of  Theorem \ref{mainthm1}  relies in an essential way on the technique developed by Grigor'yan, Netrusov and Yau  \cite{GNY}. 

\smallskip
Regarding Hersch's isoperimetric type  inequalities, they extend to our context  
as follows (see Corollary \ref{round}): Given any metric $g$ on $\mathbb S^n$ which is conformally equivalent to the standard metric $g_0$, and any positive density $\sigma \in C^2(\mathbb S^n)$, one has
$$
\lambda_2(L_\sigma) \le n \vert \mathbb S^n\vert^{\frac{n}{2}} \frac{\Vert \sigma\Vert^g_{\frac{n}{n-2}}}{\Vert  \sigma \Vert^g_1}
$$
where $\vert \mathbb S^n\vert$ is the volume of the standard   $n$-sphere and with the convention that ${\Vert \sigma\Vert^g_{\frac{n}{n-2}}}={\Vert \sigma\Vert_{\infty}}$ when $n=2$. Moreover, the equality holds in the inequality if and only if $\sigma$ is constant and $g$ is a round metric. 

\smallskip

Next, let us consider a Riemannian vector bundle $E$ over a Riemannian  manifold $(M,g)$ and a Laplace type operator 
$$
H=D^*D+T
$$
acting on smooth sections of the bundle. Here $D$ is a connection on $E$ which is compatible with the Riemannian  metric  and $T$ is a symmetric bundle endomorphism  (see e.g.,  \cite[Section E]{Berard}). The operator
$H$ is self-adjoint and elliptic and we will list its eigenvalues as:
$
\lambda_1(H)\leq\lambda_2(H)\leq\cdots\leq\lambda_k(H)\leq\cdots.
$
Important examples of such operators are given by Schr\"odinger operators acting on functions (here  $T$ is just the potential), the Hodge Laplacian acting on differential forms (in which case $T$ is the curvature term in Bochner's formula), and the square of the Dirac operator ($T$ being in this case a multiple of the scalar curvature). Another important example is  the Witten Laplacian acting on differential forms, whose restriction to  functions is precisely given by a weighted Laplacian, 
the main object of study of this paper. 

\smallskip

In Section 4 we will prove (Theorem \ref{laplacetype}) an upper bound for the gap between the $k$-th eigenvalue and the first eigenvalue of $H$ involving integral norms of a first eigensection $\psi$. For example, if $n\geq 3$, then
\begin{equation}\label{lt}
\lambda_k(H)-\lambda_1(H)\leq C([g])\left(\dfrac{\Vert{\psi}\Vert^g_{\frac{2n}{n-2}}}{\Vert{\psi}\Vert^g_2}\right)^2k^{2/n},
\end{equation}
where $C[g]$ is a constant depending only on the conformal class of $g$.
The reason why we bound the gap instead of $\lambda_k(H) $ itself is due to the fact that, even when $\sigma=1$ and $H$ is the standard Hodge Laplacian acting on $p$-forms, the first positive eigenvalue is not bounded on any conformal class of metrics (see \cite{CDforms}).  For estimates on the gap when a finite group of isometries is acting,  we refer to \cite{ColboisSavo2}.

\smallskip
Inequality \eqref{lt} should be regarded as an extension  of Theorem \ref{mainthm1}. Indeed,  if $H_\sigma={\sqrt{\sigma}} L_\sigma \frac 1{\sqrt{\sigma}}$ is the Schr\"odinger operator which  is unitarily equivalent to the operator $L_{\sigma}$, then $\lambda_1(H_\sigma)=\lambda_1(L_\sigma) =0$ and any first eigenfunction of $H_\sigma$ is a scalar multiple of $\sqrt{\sigma}$. Thus,
taking $\psi=\sqrt\sigma$ in \eqref{lt} we recover the first estimate in Theorem \ref{mainthm1}.

\smallskip

Our main aim in section  \ref{revolution} is to discuss the accuracy of the upper bounds given in Theorem \ref{mainthm1} regarding the way they depend on the density  $\sigma$
.
That is why  we exhibit an explicit family of compact manifolds $(M,g)$, each endowed with a sequence of densities  $\{\sigma_j\}$, and give a sharp lower estimate of the first positive eigenvalue $\lambda_2(L_{\sigma_j})$  in terms of $j$. This  enables us to see that both $\lambda_2(L_{\sigma_j})$ and the ratio ${\Vert \sigma_j\Vert_{\frac{n}{n-2}}^{g}}/ {\Vert \sigma_j\Vert_1^{g}} $ 
tend to infinity linearly with respect to  $j$.  Thus, we have 
$$
A \frac{\Vert \sigma_j\Vert^g_{\frac{n}{n-2}}}{\Vert \sigma_j\Vert^g_1}  \le \lambda_2(L_{\sigma_j}) \le B \frac{\Vert \sigma_j\Vert^g_{\frac{n}{n-2}}}{\Vert \sigma_j\Vert^g_1}
$$
with $\lambda_2(L_{\sigma_j})\longrightarrow +\infty$ as ${j\to +\infty}$, and $A$ and $B$ are two positive constants which do not depend on $j$.

\smallskip
The examples of densities we give are modeled on Gaussian densities (i.e. $\sigma_j(x) = e^{- j \vert x\vert^2 }$) on $\R^n$. For example, if $\Omega$ is a bounded convex domain in $\R^n$, we observe that $\lambda_2(L_{\sigma_j})\geq 2j$ for all $j$. We then extend the lower bound to manifolds of revolution (at least asymptotically as $j\to\infty$).
However, in the case of a closed manifold of revolution, there is an additional difficulty coming from the fact that we need to extend smoothly this kind of density to the whole manifold in such a way as to preserve the estimates on both the eigenvalues and the $L^p$-norms.





\section{Upper bounds for  weighted eigenvalues in a smooth metric measure space and proof of Theorem \ref{mainthm1}}\label{proofmainthm}

Let $(M,g)$ be a compact  connected  Riemannian manifold, possibly with a non-empty boundary. Let $\sigma\in L^\infty (M)$ be a bounded nonnegative  function on $M$ and let $\nu$ be a non-atomic Radon  measure on $M$ with $0<\nu(M)<\infty$. 
To such a pair $(\sigma,\nu)$, we associate the sequence of non-negative numbers $\{\mu_k(\sigma,\nu)\}_{k\in \\N}$  given by 
$$\mu_k(\sigma,\nu)=\inf_{E\in S_k} \sup_{u\in E\setminus \{ 0\} } R_{\sigma,\nu}(u)$$
where $S_k$ is the set of all $k$-dimensional vector subspaces of  $H^1(M)$ and 
$$
R_{\sigma,\nu}(u)=\frac{\int_M \vert \nabla^g u \vert_g^2 \sigma dv_g}{\int_M u^2 d\nu}.
$$

In the case where $\sigma$ is of class $C^2$ and  $\nu=\sigma dv_g$, the variational characterization of eigenvalues of the weighted Laplacian $L_\sigma =\Delta_g  -\frac{1}{\sigma} \nabla^g \sigma\cdot \nabla^g $ gives (see e.g. \cite{Grigoryan})
\begin{equation} \label{variationalcarach}
\lambda_k(L_\sigma)= \mu_k(\sigma,\sigma dv_g).
\end{equation} 

Theorem \ref{mainthm1} is a direct consequence of the following 

 \begin{thm} \label{mainthm2} Let $(M,g) $ be a compact Riemannian manifold possibly with nonempty boundary. Let $\sigma\in L^\infty (M)$ be a  nonnegative  function  and let $\nu$ be a non-atomic Radon  measure with $0<\nu(M)<\infty$. \\
(I) If $n\ge 3$, then for every  $k \ge 1$, we have
$$
\mu_k(\sigma,\nu) \le C([g]) \frac{\Vert\sigma\Vert^g_{\frac n{n-2}}}{\nu(M)}\ k^{2/n},
$$
where $C([g]) $ is a constant depending only on the conformal class of $g$. 
Moreover, if $M$ is closed and $[g]$ contains a metric with nonpositive Ricci curvature, then $C([g]) \le C(n)$ where $C(n)$ is a constant depending only on $n$. \\
(II) If $M$ is a compact orientable surface of genus $\gamma$, then for every  $k \ge 1$, we have
$$
\mu_k(\sigma,\nu)\le C \frac{\Vert\sigma\Vert_{\infty}}{\nu(M)} ({\gamma+1})\ k.
$$
where $C$ is an absolute constant.
\end{thm}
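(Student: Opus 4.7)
The plan is to reduce the weighted min-max value $\mu_k(\sigma,\nu)$ to a purely conformally invariant eigenvalue-type quantity by decoupling the density via H\"older's inequality, and then to apply the Grigor'yan--Netrusov--Yau (GNY) decomposition machinery to produce $k$ disjointly supported test functions that can be fed into the variational characterization of $\mu_k(\sigma,\nu)$.

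First, for any $u \in H^1(M)$ in dimension $n \ge 3$, H\"older's inequality with conjugate exponents $\frac{n}{n-2}$ and $\frac{n}{2}$ gives
$$
\int_M |\nabla^g u|^2\,\sigma\,dv_g \;\le\; \|\sigma\|^{g}_{\frac{n}{n-2}} \left(\int_M |\nabla^g u|^n\,dv_g\right)^{2/n}.
$$
In dimension $n=2$, the trivial bound $\int_M |\nabla^g u|^2\sigma\,dv_g \le \|\sigma\|_\infty \int_M |\nabla^g u|^2\,dv_g$ plays the same role. The decisive feature of both right-hand sides is that the gradient factor is a \emph{conformal invariant} of $u$: the $L^n$-Dirichlet integral in dimension $n$, and the $L^2$-Dirichlet integral in dimension $2$. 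Consequently, when estimating this factor one may freely replace $g$ by any convenient representative of its conformal class $[g]$.

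Next, the task reduces to producing, for every $k\ge 1$, a family $u_1,\dots,u_k \in H^1(M)$ of pairwise disjointly supported functions such that, in dimension $n \ge 3$,
$$
\frac{\left(\int_M |\nabla^g u_i|^n\,dv_g\right)^{2/n}}{\int_M u_i^{2}\,d\nu} \;\le\; C([g])\,\frac{k^{2/n}}{\nu(M)},
$$
with an analogous estimate in dimension $2$. Such functions are exactly what the GNY decomposition theorem delivers when applied to the metric measure space $(M,d_g,\nu)$: after selecting a well-behaved conformal representative $\tilde g \in [g]$ (with controlled conformal volume in general, or a Ricci-bounded representative when one is available in $[g]$, which will produce a constant depending only on $n$), one covers $M$ by a family of bounded multiplicity and extracts $2k$ disjoint annular pairs $(F_i, A_i)$ with $\nu(F_i)$ of order $\nu(M)/k$ and with conformal $n$-capacity (respectively $2$-capacity in dimension $2$) of $(F_i,A_i)$ of the required order. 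The $u_i$ are the standard capacity-minimizing cut-offs between $F_i$ and $A_i$; since their supports are disjoint they span a $k$-dimensional subspace of $H^1(M)$, and plugging this subspace into the definition of $\mu_k(\sigma,\nu)$ combined with the H\"older step yields the announced upper bound.

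For the orientable surface case, the genus factor $\gamma+1$ is introduced by replacing the covering step with Korevaar's construction: represent $M$ as a conformal branched cover of $\mathbb{S}^2$ of degree at most $\gamma+1$, pull the spherical decomposition back to $M$, and pay for the degree of the map in the multiplicity of the covering. The main obstacle I anticipate is not the H\"older decoupling, which is essentially free, but the careful bookkeeping required to translate the GNY output, originally phrased for the ordinary Laplacian with a single reference measure, into the conformally invariant $L^n$-gradient / $n$-capacity language dictated by our weighted Rayleigh quotient with two different measures ($\sigma\,dv_g$ in the numerator, $\nu$ in the denominator), and to verify that the implied constant retains its dependence only on $[g]$ (and only on $n$ in the curvature-bounded case).
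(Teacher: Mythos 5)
There is a genuine gap in the reduction step for $n\ge 3$. You decouple the density globally by H\"older's inequality, obtaining $\int_M|\nabla^g u|^2\sigma\,dv_g\le\|\sigma\|^g_{n/(n-2)}\big(\int_M|\nabla^g u|^n dv_g\big)^{2/n}$, and then claim that the task reduces to finding $k$ disjointly supported $u_i$ with
$$
\frac{\big(\int_M|\nabla^g u_i|^n dv_g\big)^{2/n}}{\int_M u_i^2\,d\nu}\le C([g])\,\frac{k^{2/n}}{\nu(M)}.
$$
This target is not achievable. The GNY decomposition produces annular pairs with $\nu$-mass bounded \emph{below} by $c\,\nu(M)/k$ and with conformal $n$-capacity bounded \emph{above} by a $k$-independent constant (the $n$-capacity of a capacity-minimizing cutoff on an annulus is a conformal invariant, bounded away from zero); plugging these in gives a ratio of order $k/\nu(M)$, not $k^{2/n}/\nu(M)$. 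In other words, by pulling $\|\sigma\|_{n/(n-2)}$ out over the whole manifold you have thrown away the very mechanism that converts $k$ into $k^{2/n}$.

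What the paper does instead is apply H\"older \emph{locally} over the support $2A_i$ of each cutoff $u_{A_i}$, which yields the factor $\big(\int_{2A_i}\sigma^{n/(n-2)}dv_g\big)^{1-2/n}$ rather than the global norm (Lemma \ref{energybound}). The GNY theorem is then invoked to produce $2k$ annuli with disjoint doublings and $\nu(A_i)\ge c\,\nu(M)/k$, and a pigeonhole argument on the disjoint sets $2A_i$ shows that at most $k$ of them can have $\int_{2A_i}\sigma^{n/(n-2)}dv_g>\frac1k\int_M\sigma^{n/(n-2)}dv_g$; the surviving $k$ annuli satisfy $\big(\int_{2A_i}\sigma^{n/(n-2)}\big)^{1-2/n}\le\|\sigma\|^g_{n/(n-2)}\,k^{-(1-2/n)}$. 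Multiplying this by the $k/\nu(M)$ coming from the $\nu$-mass lower bound produces exactly $k^{2/n}/\nu(M)$. This local H\"older step plus the extra pigeonhole selection on the $\sigma^{n/(n-2)}$-mass are the missing ideas; the rest of your plan (conformal invariance of the $L^n$-energy, using a Ricci-controlled representative of $[g]$ when available, the branched cover over $\mathbb S^2$ in the genus-$\gamma$ case) matches the paper. Note also that for $n=2$ your reduction is fine, since $k^{2/n}=k$ there and no pigeonhole saving is needed; the gap is specific to $n\ge 3$.
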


The proof of  this theorem  is based on the  method described by Grigor'yan, Netrusov and Yau in \cite{GNY} and follows the same lines as the proof they have given in the case $\sigma=1$. The main step consists in the construction of a family of disjointly supported functions with controlled Rayleigh quotient. 

Let us fix a reference  metric $g_0\in[g]$ and  denote by  $d_0$ the distance associated to $g_0$. An  \emph{annulus} $ A \subset M$ is a subset of $M$ of the form
$\{x\in M: r< d_0(x,a) < R\}$ where $a\in M$ and $0 \le r<R $ (if necessary, we will denote it $A(a,r,R)$). The annulus $2A$ is by definition the annulus
$\{x\in M: r/2< d_0(x,a) < 2R\}$. 

To such an annulus we associate the function $u_A$ supported in $2A$ and such that 
 \[
u_A( x )=
  \left\lbrace
  \begin{array}{ll}
   1-\frac 2rd_0(x,A) & \quad \text{if } \frac r2\le d_0(x,a)\le r\\
   1& \quad \text{if } x\in A\\
  1-\frac 1Rd_0(x,A) & \quad \text{if }  R\le d_0(x,a)\le 2R\\
  \end{array}
  \right.
 \]

We introduce the following constant:
$$\Gamma (g_0)=\sup_{x\in M, r>0}\frac {V_{g_0}(B(x,r))}{r^n}$$
where $B(x,r)$ stands for the ball of radius $r$ centered at $x$ in $(M,d_0)$. 
Notice that since $M$ is compact, the constant $\Gamma (g_0)$ is finite and depends only on $g_0$. This constant can be bounded from above in terms of a lower bound of the Ricci curvature $Ric_{g_0}$ and an upper bound of the diameter $diam(M,g_0)$
 (Bishop-Gromov inequality). In particular, if the Ricci curvature of $g_0$ is nonnegative, then $\Gamma (g_0)$ is bounded above by a constant depending only on the dimension $n$.

\begin{lemme}\label{energybound} For every  annulus $A\subset (M,d_0)$ one has
$$\int_M \vert\nabla^gu_A\vert^2 \sigma dv_g \le 8\ \Gamma (g_0)^{\frac2n} {\left(\int_{2A} \sigma^{\frac{n}{n-2}}dv_{g}\right)^{1-\frac2n}}.$$

\end{lemme}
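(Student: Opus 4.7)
The plan is to exploit the conformal invariance of the $L^n$-norm of the gradient in dimension $n$, which is precisely what allows us to formulate a bound in terms of the fixed background metric $g_0$ (hence of the conformally invariant constant $\Gamma(g_0)$) while working with an arbitrary representative $g \in [g_0]$.

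First I would note that $u_A$ is supported in $2A$, so the integral over $M$ reduces to one over $2A$. Then I would apply H\"older's inequality with conjugate exponents $p = n/2$ and $q = n/(n-2)$ to split the weight $\sigma$ off from $|\nabla^g u_A|^2$:
\begin{equation*}
\int_M |\nabla^g u_A|^2\,\sigma\, dv_g \;\le\; \left(\int_{2A} |\nabla^g u_A|^n\, dv_g\right)^{2/n}\left(\int_{2A} \sigma^{\frac{n}{n-2}}\, dv_g\right)^{1-\frac{2}{n}}.
\end{equation*}
The second factor is exactly the one appearing in the conclusion, so the problem reduces to bounding the first factor by $8\,\Gamma(g_0)^{2/n}$ raised to the power $n/2$, i.e.\ by $(8\,\Gamma(g_0)^{2/n})^{n/2} = 2^{(3n)/2}\,\Gamma(g_0)$; but actually an even simpler bound, independent of the representative $g$, will suffice.

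The crucial observation is that if $g = e^{2\varphi} g_0$, then $|\nabla^g u|_g^n\, dv_g = |\nabla^{g_0} u|_{g_0}^n\, dv_{g_0}$, because the factors $e^{-n\varphi}$ from the gradient norm and $e^{n\varphi}$ from the volume element cancel exactly. Hence
\begin{equation*}
\int_{2A}|\nabla^g u_A|^n\, dv_g \;=\; \int_{2A}|\nabla^{g_0} u_A|_{g_0}^n\, dv_{g_0},
\end{equation*}
and I can carry out the computation of the right-hand side using the background distance $d_0$ in which $u_A$ has an explicit piecewise linear description.

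From the definition of $u_A$, its $g_0$-gradient has modulus $2/r$ on the inner transition shell $B(a,r)\setminus B(a,r/2)$, vanishes on $A$, and has modulus $1/R$ on the outer transition shell $B(a,2R)\setminus B(a,R)$. Using the volume growth bound $V_{g_0}(B(a,\rho)) \le \Gamma(g_0)\,\rho^n$, a direct calculation yields
\begin{equation*}
\int_{2A}|\nabla^{g_0} u_A|_{g_0}^n\, dv_{g_0} \;\le\; \left(\tfrac{2}{r}\right)^n\Gamma(g_0)\, r^n + \left(\tfrac{1}{R}\right)^n\Gamma(g_0)\,(2R)^n \;=\; 2\cdot 2^n\,\Gamma(g_0).
\end{equation*}
Raising to the power $2/n$ gives $2^{2+2/n}\Gamma(g_0)^{2/n} = 4\cdot 2^{2/n}\Gamma(g_0)^{2/n} \le 8\,\Gamma(g_0)^{2/n}$ since $2/n \le 1$, which combined with the H\"older step yields the claim.

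I do not expect any genuine obstacle here; the only subtle point is the use of the conformal invariance of $\int|\nabla u|^n dv$, which is what transfers the geometric information (volume growth of balls) to the representative $g_0$ and makes the constant on the right-hand side conformally invariant. Everything else is an explicit computation on the two annular shells where $\nabla u_A$ is supported.
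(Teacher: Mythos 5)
Your proposal is correct and follows essentially the same route as the paper: reduce to $2A$, apply H\"older with exponents $n/2$ and $n/(n-2)$, use the conformal invariance of $\int\vert\nabla u\vert^n dv$ to pass to $g_0$, and bound the two transition shells by $2^{n+1}\Gamma(g_0)$, whence $2^{2+2/n}\le 8$. No gaps.
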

\begin{proof} Let $A=A(a,r,R)$ be an annulus of $(M,d_0)$. Since $u_A$ is supported in $2A$ we get, using Hölder inequality,  
$$\int_M \vert\nabla^gu_A\vert^2 \sigma dv_g =\int_{2A} \vert\nabla^gu_A\vert^2 \sigma dv_g \le \left(\int_{2A} \vert\nabla^g u_A\vert^n dv_g\right)^{\frac 2n}{\left(\int_{2A} \sigma^{\frac{n}{n-2}}dv_{g}\right)^{1-\frac2n}} .$$
From the conformal invariance of $\int_{2A} \vert\nabla^g u_A\vert^n dv_g$ we have
$$\int_{2A} \vert\nabla^g u_A\vert^n dv_g = \int_{2A} \vert\nabla^{g_0} u_A\vert^n dv_{g_0} $$
with 
 \[
\vert\nabla^{g_0} u_A\vert \overset{a.e.}{=}
  \left\lbrace
  \begin{array}{ll}
   \frac 2 r & \quad \text{if } \frac r2\le d_0(x,a)\le r\\
   0& \quad \text{if } r\le d_0(x,a)\le R\\
  \frac 1R& \quad \text{if }  R\le d_0(x,a)\le 2R.\\
  \end{array}
  \right.
 \]
 Hence,
 $$ \int_{2A} \vert\nabla^{g_0} u_A\vert^n dv_{g_0} \le \left(\frac 2r \right)^n V_{g_0}(B(a,r)) + \left(\frac 1R \right)^n V_{g_0}(B(a,2R)) \le 2^{n+1}\Gamma (g_0)$$
 where the last inequality follows from the definition of $\Gamma(g_0)$. Putting together all the previous inequalities, we obtain the result of the Lemma.
\end{proof}
\smallskip

\noindent
\textit{Proof of part (I) of Theorem \ref{mainthm2}: } 
Let  us  introduce the constant $N(M,d_0)$, that we call the \emph{covering constant}, defined to be the infimum of the set of all integers $N$ such that, for all $r>0$, any ball of radius $2r$ in $(M,d_0)$ can be covered by $N$ balls of radius $r$.   Again, the compactness of $M$ ensures that $N(M,d_0)$ is finite, and Bishop-Gromov inequality allows us to bound it from above in terms of the dimension when the Ricci curvature of $(M,g_0)$ is nonnegative. 

Since the metric measure space $(M,d_0,\nu)$ has a finite covering constant and a non atomic measure, one can apply Theorem 1.1  of \cite{GNY} and conclude that there exists a constant $c(N)$ depending only on $N(M,d_0)$ such that for each positive integer $k$, there exists a family of $2k$ annuli $A_1,\cdots, A_{2k} $ on $M$ such that the annuli $2A_1,\cdots, 2A_{2k} $ are mutually disjoint and, $\forall i\le 2k$,
\begin{equation}\label{gny}
\nu(A_i) \ge c (N)\frac{\nu(M)}{k}.
\end{equation}

Since the  annuli $2A_i$ are mutually disjoint, one has 
$$\sum_{i\le 2k} \int_{2A_i}\sigma^{\frac{n}{n-2}}dv_g \le \int_{M}\sigma^{\frac{n}{n-2}}dv_g .$$
Thus, at most $k$ annuli  among $2A_1,\cdots, 2A_{2k} $ satisfy
$
\int_{2A_i}\sigma^{\frac{n}{n-2}}dv_g > \frac{1}{k}\int_{M}\sigma^{\frac{n}{n-2}}dv_g $. Therefore, we can assume without loss of generality that the $k$ annuli $A_1,\cdots, A_{k}$ satisfy 
$$
\left(\int_{2A_i}\sigma^{\frac{n}{n-2}}dv_g\right)^{1-\frac2n} \le \frac{1}{k^{1-\frac2n} }\Vert \sigma \Vert_{\frac{n}{n-2}}^g.$$ 
The corresponding functions $u_{A_1}\cdots, u_{A_k}$ are such that (Lemma \ref{energybound})
\begin{eqnarray*}
R_{\sigma,\nu}(u_{A_i})=\frac{\int_M \vert \nabla^g u_{A_i} \vert_g^2 \sigma dv_g}{\int_M u_{A_i}^2  d\nu}&\le &
\frac 1{\nu({A_i})}  8\Gamma (g_0)^{\frac 2n} {\left(\int_{2A_i} \sigma^{\frac{n}{n-2}}dv_{g}\right)^{1-\frac2n}}\\
&\le&\frac 1{\nu({A_i})}  8\Gamma (g_0)^{\frac 2n}\frac {\Vert\sigma\Vert^g_{\frac{n}{n-2}}}{{k}^{1-\frac2n}}.
\end{eqnarray*}
Using inequality \eqref{gny}, we get
$$R_{\sigma,\nu}(u_{A_i})\le 8\frac{\Gamma (g_0)^{\frac2n}}{c(N(M,d_0))} \frac {\Vert\sigma\Vert^g_{\frac{n}{n-2}}}{\nu(M)}\ k^{\frac2n}.$$
Since the functions $u_{A_1}, \cdots, u_{A_k}$ are disjointly supported, they form a $k$-dimensional subspace on which the Rayleigh quotient is bounded above by the right hand side of the last inequality. We set  $C([g])=8\frac{\Gamma (g_0)^{\frac2n}}{c(N(M,d_0))}$ and conclude using the min-max formula. 

As we mentioned above, if $M$ is closed and the Ricci curvature of $g_0$ is non negative, then  the constants $\Gamma(g_0)$, $N(M,d_0)$ and, hence, $C([g])$ are bounded in terms of the dimension $n$. 

\medskip

\noindent
\textit{Proof of part (II) of Theorem \ref{mainthm2}.} 
Assume now that  $(M,g)$ is a compact orientable surface of genus $\gamma$, possibly with boundary, and let $\rho\in C^\infty(M)$ be a positive function on $M$.
If $M$ has nonempty boundary, then we glue a disk on each boundary component of $M$ and extend $\nu$ by $0$.
  This closed surface admits a conformal branched cover $\psi $ over
  $\mathbb{S}^2$ with degree $\deg (\psi)\le\lfloor\frac{\gamma+3}{2}\rfloor$
   We endow  $\mathbb{S}^2$ with the usual spherical distance $d_0$ and
  the pushforward measure 
  $\mu=\psi_* (\nu)$, 
   We apply Theorem~1.1  of \cite{GNY} 
  to the metric measure space $(\mathbb{S}^2,d_0,\mu)$ and deduce that there exist an absolute constant $c = c\left(N(\mathbb{S}^2,d_0)\right)$  and
  $ k$ annuli 
  $A_1, \dots,A_{k} \subset \mathbb{S}^2$ such that the annuli $2A_1, \dots,2A_{ k}$ are mutually disjoint and, $\forall i\le k$,
  \begin{gather}\label{IneqMuAiboundedS}
    \mu(A_i) \ge c \ \frac{\mu(\mathbb{S}^2)}{k}.
  \end{gather}
 We set for each $i\le k$, 
 $v_i=u_{A_i}\circ \psi$. 
 From the conformal invariance of the energy and Lemma \ref{energybound}, one deduces that, for every $i\le k$,
  \begin{align*}
\int_{M } \vert \nabla_g v_i\vert^2\,dv_g
    &=
    \deg(\psi)\int_{\mathbb{S}^2} \vert \nabla_{g_0} u_{A_i} \vert^2\,dv_{g_0}
    \le 8\Gamma(\mathbb{S}^2,g_0) \lfloor\frac{\gamma+3}{2}\rfloor,
  \end{align*}
while, since $u_{A_i}$ is equal to 1 on ${A_i}$, 
 \begin{align*}
\int_{M }  v_i^2 d\nu
    &\ge \nu\left(\psi^{-1}(A_i)\right)    = \mu(A_i) \ge c \ \frac{\mu(\mathbb{S}^2)}{k}= c \ \frac{\nu(M)}{k}.
  \end{align*}
  Therefore,
  $$R_{\sigma,\nu}({v_i})\le \frac{8\Gamma(\mathbb{S}^2,g_0)}{c\nu(M)} \lfloor\frac{\gamma+3}{2}\rfloor\ k\le \frac{C  (\gamma+1)}{\nu(M)}\ k$$
 where $C$ is an absolute constant. 
Noting that the $k$ functions $v_1,\dots v_k$ are disjointly supported in $M$, we deduce the desired inequality for $\mu_k(\sigma,\nu)$.

\medskip

We end this section with the following observation showing that the presence of the density in the RHS of the inequalities of Theorem \ref{mainthm1}  is essential. This question will also be discussed in Section \ref{revolution}.

\begin{prop} \label{HJ} Let $(M,g)$ be a compact Riemannian  manifold and let $f$ be a smooth Morse function on $M$. For every $\varepsilon >0$ we set $\sigma_{\varepsilon}=e^{-f/\varepsilon}$. If $m_0$ denotes the number of stable critical points of $f$, then there exists $\varepsilon_0>0$ such that,   $\forall \varepsilon\in(0,\varepsilon_0)$, 
$$\lambda_{m_0+1}(L_{\sigma_{\varepsilon}})\ge \frac1{\sqrt \varepsilon}.$$
\end{prop}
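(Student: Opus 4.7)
The plan is to use the unitary equivalence between $L_{\sigma_\varepsilon}$ and the Schr\"odinger operator $H_{\sigma_\varepsilon}=\sqrt{\sigma_\varepsilon}\,L_{\sigma_\varepsilon}\,\sigma_\varepsilon^{-1/2}$ recalled in the introduction, which in the case $\sigma_\varepsilon=e^{-f/\varepsilon}$ (by replacing $f$ with $f/\varepsilon$ in the formula for $H_\sigma$) reads
$$H_{\sigma_\varepsilon}=\Delta_g+V_\varepsilon,\qquad V_\varepsilon=\frac{|\nabla^g f|^2}{4\varepsilon^2}+\frac{\Delta_g f}{2\varepsilon}.$$
Since $\lambda_k(L_{\sigma_\varepsilon})=\lambda_k(H_{\sigma_\varepsilon})$, it suffices to show that the number of eigenvalues of $\Delta_g+V_\varepsilon$ strictly below $1/\sqrt{\varepsilon}$ is at most $m_0$ for $\varepsilon$ small, a classical semiclassical question answered by the Helffer--Sj\"ostrand analysis of the Witten Laplacian.

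The core of the argument is a combination of two local estimates on $V_\varepsilon$, glued together by IMS localization. First, if $\delta>0$ is chosen so that the balls $B(p_i,\delta)$ around the critical points $p_1,\dots,p_m$ of $f$ are disjoint and $|\nabla f|\ge\delta$ on $U_0:=M\setminus\bigcup_i B(p_i,\delta)$, then $V_\varepsilon\ge \delta^2/(4\varepsilon^2)-C/\varepsilon\ge 1/(2\varepsilon)$ on $U_0$ for $\varepsilon$ small. Second, in Morse coordinates $y$ around each $p_i$, with Hessian eigenvalues $\lambda_1^{(i)},\dots,\lambda_n^{(i)}$,
$$V_\varepsilon(y)=\frac{1}{4\varepsilon^2}\sum_{j}(\lambda_j^{(i)})^2\,y_j^2 - \frac{1}{2\varepsilon}\sum_{j}\lambda_j^{(i)} + \text{h.o.t.}$$
After the rescaling $y=\sqrt{\varepsilon}\,z$ the leading part is an anisotropic harmonic oscillator whose Dirichlet ground state on a small ball equals $\tfrac{1}{\varepsilon}\sum_{\lambda_j^{(i)}<0}|\lambda_j^{(i)}|+O(1)$; this is strictly positive of order $1/\varepsilon$ at every non-minimum critical point and vanishes precisely at the $m_0$ stable critical points.

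A quadratic partition of unity $\sum_{i=0}^m\chi_i^2=1$ subordinate to $U_0$ and the balls around the $p_i$ then yields, via the IMS identity,
$$\langle H_{\sigma_\varepsilon}u,u\rangle \ge \sum_{i=0}^m\langle H_{\sigma_\varepsilon}(\chi_iu),\chi_iu\rangle - C\|u\|_2^2,$$
where the error is bounded by $\sup_x\sum_i|\nabla\chi_i|^2(x)$. The two local estimates above force each summand with $i=0$ or $i>m_0$ to be at least $(c/\varepsilon)\|\chi_iu\|_2^2$. To absorb the contribution from the $m_0$ stable critical points one constructs a quasi-mode $\psi_i$ at each minimum out of the local Gaussian ground state; the $\psi_i$ span an $m_0$-dimensional subspace $V_0$, and Agmon-type exponential decay estimates show that for $u\perp V_0$ the $L^2$-mass of $u$ on the wells at the minima is negligible, whence $\langle H_{\sigma_\varepsilon}u,u\rangle\ge (c'/\varepsilon)\|u\|_2^2\ge\|u\|_2^2/\sqrt{\varepsilon}$ for $\varepsilon$ small. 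The min-max principle then delivers $\lambda_{m_0+1}(H_{\sigma_\varepsilon})\ge 1/\sqrt\varepsilon$.

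The expected main obstacle is the last step: proving that orthogonality to $V_0$ really forces eigenfunctions to have very little mass in the wells near the minima, so that the $O(1/\varepsilon)$ lower bound obtained on the other regions is inherited globally. This quasi-mode/Agmon argument is the technical heart of the semiclassical analysis of the Witten Laplacian, and is precisely the content that can be invoked from \cite{HS}.
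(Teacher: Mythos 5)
Your proposal is correct and takes essentially the same approach as the paper: both pass through the unitary equivalence $L_{\sigma_\varepsilon}\sim H_{\sigma_\varepsilon}$ and then invoke the Helffer--Sj\"ostrand semiclassical analysis of the Witten Laplacian from \cite{HS}. The paper simply cites the eigenvalue-counting statement (exactly $m_0$ eigenvalues of the rescaled operator lie in $[0,\varepsilon^{3/2})$) and does arithmetic, whereas you sketch the IMS/Agmon mechanism underlying it before conceding the technical heart to the same reference.
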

\begin{proof}
First observe that for any density $\sigma=e^{-f}$, the operator $L_{\sigma}$ is unitarily equivalent to the Schrödinger operator 
$$H_\sigma= \Delta_g - \frac {\Delta_g\sqrt{\sigma}}{\sqrt{\sigma}} = \Delta_g +\frac 14 \vert \nabla^gf\vert^2 +\frac12 \Delta_g f$$
acting on $L^2(dv_g)$ (indeed,  $L_{\sigma}=\frac 1{\sqrt{\sigma}} H_\sigma {\sqrt{\sigma}}$, where multiplication by ${\sqrt{\sigma}}$ is a unitary transform from $L^2(\sigma dv_g)$ to $L^2(dv_g)$). Consequently, denoting by $\left(\lambda_k(\varepsilon)\right)_{k\ge1}$ the eigenvalues of the semiclassical Schrödinger operator $\varepsilon^2\Delta_g +\frac 14 \vert \nabla^gf\vert^2 +\frac\varepsilon 2 \Delta_g f$, we get
$$\lambda_{k}(L_{\sigma_{\varepsilon}})=\frac 1{\varepsilon^2}\lambda_k(\varepsilon).$$
According to \cite {HS}  (see also \cite[proposition 2.2]{HKN}), there exists $\varepsilon_0>0$ such that, $\forall \varepsilon<\varepsilon_0$, the number of eigenvalues $\lambda_k(\varepsilon)$ contained in the  interval $[0, \varepsilon^{3/2} )$ is exactly $m_0$, that is, $\lambda_{m_0}(\varepsilon)< \varepsilon^{3/2}$ and $\lambda_{m_0+1}(\varepsilon)\ge \varepsilon^{3/2}$.  Therefore,
$$\lambda_{m_0+1}(L_{\sigma_{\varepsilon}})=\frac 1{\varepsilon^2}\lambda_{m_0+1}(\varepsilon) \ge  \frac1{\sqrt \varepsilon}.$$
\end{proof}


\section{Sharp estimates for the first positive eigenvalue } \label{sphere}

Let $(M,g)$ be a  compact  connected  Riemannian $n$-dimensional manifold, possibly with a non-empty boundary. Li and Yau introduced in \cite{LY}  the notion of conformal volume as follows : 
Given any immersion $\phi$ from  $M$ to the standard sphere $\left(\Sp^p, g\sb{\Sp\sp{p}}\right)$ of dimension $p$, we denote by $V(\phi)$ the volume of $M$ with respect to the metric $\phi^*g\sb{\Sp\sp{p}}$, and by $V_c(\phi)$ the supremum of $V(\gamma\circ\phi)$ as $\gamma$ runs over the group of conformal diffeomorphisms of $\left(\Sp^p, g\sb{\Sp\sp{p}}\right)$. 
The conformal volume of   $(M,[g])$ is 
$$V_c(M,[g])=\inf_{p>n} \inf \left\{ V_c(\phi) \ : \ \phi\in {\rm conf} \left((M,g),\Sp^p\right)\right\}$$
where ${\rm conf} \left((M,g),\Sp^p\right)$ is the set of all conformal immersions from  $(M,g)$ to $\Sp^p$. 

\smallskip
With the same notations as in the previous section, we have the following 

\begin{thm}\label{confvolume}
Let $(M,g) $ be a compact Riemannian manifold possibly with nonempty boundary. Let $\sigma\in L^\infty (M)$ be a nonnegative  function  and let $\nu$ be a non-atomic Radon  measure on $M$ with $0<\nu(M)<\infty$. One has
\begin{equation}\label{confvolume0}
\mu_2(\sigma,\nu)\le  n V_c(M,[g])^{2/n}\ \frac{\Vert\sigma\Vert^g_{\frac n{n-2}}}{\nu(M)},
\end{equation}
with the convention that ${\norm{\sigma}_{\frac{n}{n-2}}}= {\norm{\sigma}_{\infty}}$ if $n=2$.

\end{thm}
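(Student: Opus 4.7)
The plan is to adapt the classical Li--Yau/Hersch conformal volume argument to the weighted setting, using the coordinate functions of a suitable conformal map into $\Sp^p$ as test functions in the min-max characterization of $\mu_2(\sigma,\nu)$.

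First, I would recall that by the min-max principle, for any function $u\in H^1(M)$ with $\int_M u\,d\nu=0$ one has $\mu_2(\sigma,\nu)\le R_{\sigma,\nu}(u)$, since the pair $\{1,u\}$ spans a 2-dimensional subspace on which the Rayleigh quotient is maximized at $u$. Next, given any conformal immersion $\phi:(M,g)\to(\Sp^p,g_{\Sp^p})$, I would apply the standard Hersch renormalization: for $a$ in the open unit ball $B^{p+1}\subset\R^{p+1}$, let $\gamma_a$ denote the conformal diffeomorphism of $\Sp^p$ associated with $a$, and consider the map
\[
F:B^{p+1}\to\R^{p+1},\qquad F(a)=\frac{1}{\nu(M)}\int_M\gamma_a\circ\phi\,d\nu.
\]
Since $\nu$ is non-atomic and $\phi$ is an immersion (so $\phi^{-1}(-a_0)$ is a finite, hence $\nu$-null set for every $a_0\in\Sp^p$), $F$ extends continuously to $\overline{B^{p+1}}$ as the identity on the boundary, and a topological degree argument yields some $a\in B^{p+1}$ with $F(a)=0$. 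Setting $\Phi=\gamma_a\circ\phi$ with components $\Phi_1,\dots,\Phi_{p+1}$, we have $\sum_i\Phi_i^2\equiv 1$ and $\int_M\Phi_i\,d\nu=0$ for each $i$.

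Using each $\Phi_i$ as a test function gives $\mu_2(\sigma,\nu)\int_M\Phi_i^2\,d\nu\le\int_M|\nabla^g\Phi_i|^2\sigma\,dv_g$. Summing over $i$ and using $\sum_i\Phi_i^2=1$ together with the conformality of $\Phi$, which means $\Phi^*g_{\Sp^p}=e^{2h}g$ for some function $h$ and hence $\sum_i|\nabla^g\Phi_i|^2=|d\Phi|_g^2=n\,e^{2h}$, I would obtain
\[
\mu_2(\sigma,\nu)\,\nu(M)\le n\int_M e^{2h}\sigma\,dv_g.
\]
When $n\ge 3$, apply H\"older's inequality with exponents $n/2$ and $n/(n-2)$:
\[
\int_M e^{2h}\sigma\,dv_g\le\left(\int_M e^{nh}\,dv_g\right)^{2/n}\Vert\sigma\Vert^g_{\frac{n}{n-2}}.
\]
Since $\int_M e^{nh}\,dv_g=V_g(M,\Phi^*g_{\Sp^p})=V(\gamma_a\circ\phi)\le V_c(\phi)$, taking the infimum over all conformal immersions $\phi$ and over $p>n$ yields the claimed bound. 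For $n=2$, the same chain of inequalities works with the convention $\Vert\sigma\Vert^g_{\frac{n}{n-2}}=\Vert\sigma\Vert_\infty$, bounding $\int e^{2h}\sigma\,dv_g\le\Vert\sigma\Vert_\infty\,V(\Phi)$ directly, and $n V_c^{2/n}=2V_c$ matches the stated formula.

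The main technical obstacle is the Hersch renormalization step: although classical when $\nu$ is the Riemannian measure, one must verify that the fixed-point/degree argument still produces the center-of-mass condition for a general non-atomic Radon measure $\nu$, which reduces to checking that the pushforward measures $(\gamma_a\circ\phi)_*\nu$ converge weakly to the Dirac mass at $a_0\in\partial B^{p+1}$ as $a\to a_0$. This is the point where non-atomicity of $\nu$ is essential, as it ensures the finite fibers $\phi^{-1}(-a_0)$ carry no mass. Once this is in place, the remaining steps are routine computations relying on the conformal invariance of $|d\Phi|^n dv_g$ in dimension $n$ and the definition of the conformal volume $V_c(M,[g])$.
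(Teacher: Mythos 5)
Your proposal is correct and follows essentially the same route as the paper: use a Hersch-type center-of-mass renormalization (with respect to $\nu$) of a conformal immersion into $\Sp^p$, plug the coordinate functions into the variational bound for $\mu_2(\sigma,\nu)$, sum, apply H\"older with exponents $n/2$ and $n/(n-2)$, and identify the resulting integral with the conformal volume. The only difference is presentational: where you spell out the degree-theoretic argument for the renormalization (and the role of non-atomicity of $\nu$), the paper simply invokes a standard center-of-mass lemma from the literature, and where you write the conformal factor as $e^{2h}$, the paper carries along $\tfrac1n\sum_j|\nabla^g\bar\phi_j|^2$ explicitly; the substance is the same.
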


\begin{proof}

From the definition of  $\mu_2(\sigma,\nu)$, it is clear that if $u\in C^1(M)$ is any nonzero function such that  $\int_{\mathbb M}u \ d\nu=0$, then, taking for $E$ the 2-dimensional vector space generated by constant functions and $u$, we have 
$$\mu_2(\sigma,\nu) \le \sup_{w\in E}R_{\sigma,\nu}(w)\le R_{\sigma,\nu}(u).$$
 
Let $\phi\in conf \left((M,g),\Sp^p\right)$. 
Using standard center of mass lemma (see e.g., \cite[Proposition 4.1.5]{GNP}), there exists a conformal diffeomorphism $\gamma$ of $\Sp^p$ so that the Euclidean components of the map $\bar\phi =\gamma\circ\phi$ satisfy
$$\int_M\bar\phi_j\ d\nu =0, \quad j\le p+1.$$
Thus, for every $j\le p+1$,
\begin{equation}\label{confvolume1}
 \mu_2(\sigma,\nu)\int_M\bar\phi_j^2 d\nu \le \int_M \vert \nabla^g\bar \phi_j\vert^2 \sigma dv_g.
 \end{equation}
From the fact that $\phi$ is conformal one has $\bar\phi^*g\sb{\Sp\sp{p}}= \left(\frac1n\sum_{j\le p+1}\vert \nabla^g\bar \phi_j\vert^2\right) g $ and
$$V(\bar\phi)=\int_M \left(\frac1n\sum_{j\le p+1}\vert \nabla^g\bar \phi_j\vert^2\right)^{n/2}  dv_g.$$
We sum up in \eqref{confvolume1} and use Hölder's inequality to get
$$
\begin{aligned}
\mu_2(\sigma,\nu)\ \nu(M)&\le   \int_M\sum_{j\le p+1} \vert \nabla^g\bar \phi_j\vert^2 \sigma dv_g\\
&\le nV(\bar\phi)^{2/n} \Vert\sigma\Vert^g_{\frac n{n-2}}\le nV_c(\phi)^{2/n} \Vert\sigma\Vert^g_{\frac n{n-2}}.
\end{aligned}
$$
The proof of the theorem  follows immediately. 
\end{proof}

An immediate consequence of Theorem \ref{confvolume} is the following

\begin{cor}\label{confvolume'}
Let $(M,g) $ be a compact Riemannian manifold possibly with nonempty boundary and let $\sigma\in C^2 (M)$ be a positive  function.  One has
\begin{equation}\label{confvolume2}
\lambda_2(L_\sigma)\le  n V_c(M,[g])^{2/n}\ \frac{\Vert\sigma\Vert^g_{\frac n{n-2}}}{\Vert\sigma\Vert^g_1}.
\end{equation}
\end{cor}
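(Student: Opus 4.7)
The plan is to derive Corollary \ref{confvolume'} as a direct specialization of Theorem \ref{confvolume} by choosing the measure $\nu$ to be precisely $\sigma\,dv_g$, thereby connecting the abstract quantity $\mu_2(\sigma,\nu)$ to the genuine spectral quantity $\lambda_2(L_\sigma)$.

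First I would verify that the hypotheses of Theorem \ref{confvolume} are satisfied for this choice. Since $\sigma\in C^2(M)$ is positive and $M$ is compact, $\sigma$ is bounded, so $\sigma\in L^\infty(M)$. The measure $\nu=\sigma\,dv_g$ is a Radon measure, non-atomic (because $dv_g$ is non-atomic and $\sigma$ is bounded), and satisfies $0<\nu(M)<\infty$ since $\sigma$ is a positive continuous function on a compact manifold. Thus Theorem \ref{confvolume} applies and yields
$$
\mu_2(\sigma,\sigma\,dv_g)\le nV_c(M,[g])^{2/n}\,\frac{\Vert\sigma\Vert^g_{\frac{n}{n-2}}}{\nu(M)}.
$$

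Second, I would identify the two sides of this inequality with the quantities appearing in the corollary. The denominator is
$$
\nu(M)=\int_M\sigma\,dv_g=\Vert\sigma\Vert^g_1.
$$
For the left-hand side, the variational characterization \eqref{variationalcarach} recalled at the start of Section \ref{proofmainthm} gives
$$
\lambda_2(L_\sigma)=\mu_2(\sigma,\sigma\,dv_g).
$$
Substituting these two identifications into the inequality above yields the desired bound
$$
\lambda_2(L_\sigma)\le nV_c(M,[g])^{2/n}\,\frac{\Vert\sigma\Vert^g_{\frac{n}{n-2}}}{\Vert\sigma\Vert^g_1}.
$$

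There is essentially no obstacle here, as all the substantive work has been carried out in Theorem \ref{confvolume}; the only thing to remark is that for $n=2$ the convention $\Vert\sigma\Vert^g_{n/(n-2)}=\Vert\sigma\Vert_\infty$ carries over directly from the statement of Theorem \ref{confvolume} to the corollary, so no separate argument is needed to handle the two-dimensional case.
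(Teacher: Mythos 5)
Your proposal is correct and matches the paper's (implicit) argument exactly: the corollary is stated as an immediate consequence of Theorem \ref{confvolume}, obtained by taking $\nu=\sigma\,dv_g$, invoking the identification $\lambda_2(L_\sigma)=\mu_2(\sigma,\sigma\,dv_g)$ from \eqref{variationalcarach}, and noting $\nu(M)=\Vert\sigma\Vert^g_1$. Your verification of the hypotheses is a welcome bit of extra care, but nothing beyond the paper's route.
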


 In \cite{EI}, Ilias and the second author proved that if the Riemannian manifold $(M,g)$ admits an isometric immersion into a Euclidean space $\R^p$ whose Euclidean components are first non-constant  eigenfunctions of the Laplacian $\Delta_g$, then the following equality holds
 \begin{equation}\label{minimal0}
 \lambda_2(\Delta_g) V_g(M)^{2/n}=  n V_c(M,[g])^{2/n}.
  \end{equation}
 In this case, the inequality \eqref{confvolume2} reads
 \begin{equation}\label{minimal}
 \lambda_2(L_\sigma)\le  \lambda_2(\Delta_g) V_g(M)^{2/n}\ \frac{\Vert\sigma\Vert^g_{\frac n{n-2}}}{\Vert\sigma\Vert^g_1}
 \end{equation}
 where the equality holds whenever $\sigma$ is a constant function. 
 This proves the sharpness of the inequality of Corollary \ref{confvolume'}.
 
 \smallskip
 
 Notice that all compact rank one symmetric spaces satisfy \eqref{minimal0} and hence \eqref{minimal}. In particular, 
 we have the following result that extends Hersch's isoperimetric inequality \cite{Hersch} and its generalization to higher dimensions \cite{EI}.  
\begin{cor}\label{round}
Let  $g$ be a Riemannian metric on $\mathbb S^n$ which is conformal to the standard metric $g\sb{\Sp\sp{n}}$, and let  $\sigma\in C^2(\mathbb S^n)$ be a  positive function. One has 
\begin{equation}\label{hersch}
\lambda_2(L_\sigma)  \le n \vert \mathbb S^n\vert^{\frac{2}{n}} \frac{\Vert\sigma\Vert^g_{\frac n{n-2}}}{\Vert\sigma\Vert^g_1}
\end{equation}
where $  \vert \mathbb S^n\vert$ is the volume of the Euclidean unit sphere.  
Moreover, the  equality holds in \eqref{hersch} if and only if  $\sigma$ is constant and $g$ is  homothetically equivalent to  $g\sb{\Sp\sp{n}}$.
\end{cor}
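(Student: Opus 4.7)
The plan is to derive \eqref{hersch} directly from Corollary~\ref{confvolume'} by controlling the conformal volume $V_c(\mathbb S^n, [g_{\Sp^n}])$. Since $g \in [g_{\Sp^n}]$, the identity map $\phi : (\mathbb S^n, g) \to (\mathbb S^n, g_{\Sp^n})$ is a conformal diffeomorphism. For any M\"obius transformation $\gamma$ of $(\mathbb S^n, g_{\Sp^n})$, the composition $\gamma\circ\phi$ is a diffeomorphism of $\mathbb S^n$, and a change of variables gives
\begin{equation*}
V(\gamma\circ\phi) = \int_{\mathbb S^n} dv_{(\gamma\circ\phi)^* g_{\Sp^n}} = \int_{\mathbb S^n} dv_{g_{\Sp^n}} = \vert \mathbb S^n\vert.
\end{equation*}
Thus $V_c(\phi) = \vert \mathbb S^n\vert$, so $V_c(\mathbb S^n, [g_{\Sp^n}]) \le \vert \mathbb S^n\vert$, and inserting this into \eqref{confvolume2} produces \eqref{hersch}.

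The ``if'' half of the equality statement is a direct check: when $\sigma$ is a positive constant and $g = c^2 g_{\Sp^n}$, one has $\lambda_2(L_\sigma) = \lambda_2(\Delta_g) = n/c^2$ and $\Vert\sigma\Vert^g_{n/(n-2)}/\Vert\sigma\Vert^g_1 = V_g(\mathbb S^n)^{-2/n} = c^{-2}\vert \mathbb S^n\vert^{-2/n}$, so both sides of \eqref{hersch} equal $n/c^2$. The converse is the main obstacle, and I would attack it by tracking the equality case through the proof of Theorem~\ref{confvolume} applied with $\phi$ the identity and $\gamma$ a M\"obius transformation chosen so that the components $\bar\phi_j$ of $\bar\phi = \gamma\circ\phi$ are $\sigma dv_g$-orthogonal to the constants. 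If equality holds in \eqref{hersch} then equality must hold both in the Rayleigh quotient step \eqref{confvolume1} -- forcing each $\bar\phi_j$ to be an eigenfunction of $L_\sigma$ associated to $\lambda_2(L_\sigma)$ -- and in the H\"older step -- forcing $\sum_j \vert\nabla^g \bar\phi_j\vert^2$ to be proportional to $\sigma^{2/(n-2)}$ almost everywhere (and, for $n=2$, $\sigma$ to be essentially constant on the set where $\sum_j \vert\nabla^g\bar\phi_j\vert^2 > 0$).

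The closing observation, which unlocks the rigidity, is that $\bar\phi(\mathbb S^n) \subset \mathbb S^n \subset \R^{n+1}$, so $\sum_j \bar\phi_j^2 \equiv 1$. Using the Leibniz identity $L_\sigma(u^2) = 2 u L_\sigma u - 2\vert\nabla^g u\vert^2$ together with $L_\sigma \bar\phi_j = \lambda_2(L_\sigma) \bar\phi_j$, I obtain
\begin{equation*}
0 = L_\sigma(1) = L_\sigma\!\left(\sum_j \bar\phi_j^2\right) = 2\lambda_2(L_\sigma) - 2\sum_j \vert\nabla^g \bar\phi_j\vert^2,
\end{equation*}
so $\sum_j \vert\nabla^g \bar\phi_j\vert^2 \equiv \lambda_2(L_\sigma)$ is a positive constant. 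Combined with the H\"older equality condition from the previous paragraph, this forces $\sigma$ to be constant. Once $\sigma$ is constant, $L_\sigma = \Delta_g$ and equality in \eqref{hersch} reduces to equality in the classical conformal volume bound $\lambda_2(\Delta_g) V_g(\mathbb S^n)^{2/n} \le n\vert\mathbb S^n\vert^{2/n}$, whose rigidity statement (Hersch \cite{Hersch} for $n=2$ and its higher-dimensional extension in \cite{EI}) yields the desired conclusion that $g$ is homothetic to $g_{\Sp^n}$.
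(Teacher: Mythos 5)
Your proposal is correct and follows essentially the same route as the paper: the inequality comes from the center-of-mass/H\"older argument of Theorem \ref{confvolume} (equivalently, from $V_c(\mathbb S^n,[g_{\Sp^n}])=\vert\mathbb S^n\vert$), and the rigidity hinges on exactly the paper's key identity $\sum_j\bar\phi_j^2\equiv 1$, which via $L_\sigma(u^2)=2uL_\sigma u-2\vert\nabla^g u\vert^2$ forces $\sum_j\vert\nabla^g\bar\phi_j\vert^2\equiv\lambda_2(L_\sigma)$ and hence $\sigma$ constant. The only cosmetic difference is at the very end: the paper reads off $\gamma^*g_{\Sp^n}=\frac{\lambda_2}{n}g$ directly, whereas you invoke the classical rigidity of Hersch and \cite{EI}; both are fine.
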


 \begin{proof}
 Let $\gamma=(\gamma_1,...,\gamma_{n+1}):\mathbb S^n\to \mathbb S^n$  be conformal transformation 
of $\Sp^n$ such that, for every $ i\le n+1$, we have
$$
\int_{\mathbb S^n} \gamma_i \sigma \ dv_{g} =0.
$$
Using the same arguments as in the proof of Theorem \ref{confvolume}, we get
$$\lambda_2(L_\sigma)\int_{\mathbb S^n} \gamma_i^2\sigma  \ dv_{g}\le \int_{\mathbb S^n} \vert\nabla^g \gamma_i \vert^2\sigma\   dv_{g}, \quad i\le n+1,$$
$$
\begin{aligned}
\lambda_2(L_\sigma)  \Vert\sigma\Vert^g_1 &\le \int_{\mathbb S^n}\sum_{i\le n+1}\vert\nabla^g \gamma_i \vert^2\sigma\   dv_{g}\\
&\le n\left( \int_{\mathbb S^n} \left(\frac{1}{n}\sum_{i\le n+1}\vert \nabla^{g} \gamma_i\vert^2\right)^{n/2} dv_{g} \right)^{2/n}\Vert\sigma\Vert^g_{\frac n{n-2}}
\end{aligned}
$$
with (since $\gamma$ is conformal from $(\Sp^n,g)$ to $(\Sp^n,g\sb{\Sp\sp{n}})$ )
$$\int_{\mathbb S^n} \left(\frac{1}{n}\sum_{i\le n+1} \vert \nabla^{g} \gamma_i\vert^2\right)^{n/2} dv_{g} = V_{\gamma^*g\sb{\Sp\sp{n}}}(\Sp^n)=\vert \mathbb S^n\vert.$$
The inequality \eqref{hersch} follows immediately.

Assume that  the equality holds in  \eqref{hersch}.  This implies that
\begin{itemize}
\item $L_\sigma\gamma_i = \lambda_2(L_\sigma) \gamma_i$, $i\le n+1$, and
\item the function $\sigma$ is constant if $n=2$ and, if $n\ge 3$,  the functions  $\left(\sum_{i=1}^{n+1} \vert \nabla^{g} \gamma_i\vert^2\right)^{n/2} $ and $\sigma^{\frac n{n-2}}$ are proportional.
 
\end{itemize}
An elementary computation gives
$$0=\sum_{i\le n+1} L_\sigma \gamma_i^2= 2\sum_{i\le n+1} \left( \gamma_i L_\sigma\gamma_i -  \vert \nabla^{g} \gamma_i\vert^2\right)= 2 \left(\lambda_2(L_\sigma) -  \sum_{i\le n+1}\vert \nabla^{g} \gamma_i\vert^2\right).$$
From the previous facts we see that $\sigma$ is constant in all cases and, since $\gamma^*g\sb{\Sp\sp{n}}= \left(\frac1n\sum_{j\le n+1}\vert \nabla^g\gamma_j\vert^2\right) g =\frac {\lambda_2(L_\sigma)}n g$, the metric $g$ is homothetically equivalent to $g\sb{\Sp\sp{n}}$.   
\end{proof}

\section{Eigenvalues of Laplace type operators}\label{witten}

In this section we show that Theorem \ref{mainthm1} extends to a much more general framework to give upper bounds of the eigenvalues of certain operators acting on sections of vector bundles, precisely the Laplace-type operators defined in the introduction. Throughout  this section, $(M,g)$ denotes a compact Riemannian manifold without boundary. We will use the notations introduced in Section 2 and refer to \cite{Berard, Gilkey} for  details on Laplace type operators. 

\begin{thm}\label{laplacetype} Let $H=D^{\star}D+T$ be an operator of Laplace type acting on sections of a Riemannian vector bundle $E$ over $(M,g)$, and let  $\psi$ be an eigensection associated to $\lambda_1(H)$. One has for all $k\geq 1$  
$$
\lambda_k(H)- \lambda_1(H)\leq\mu_k(\sigma,\nu)
$$
with $\sigma=\abs{\psi}^2$ and $\nu=\abs{\psi}^2dv_g$. Thus,

\item a)  If $n\geq  3$ then
$$
\lambda_k(H)-\lambda_1(H)\leq C([g])\left(\dfrac{\Vert{\psi}\Vert^g_{\frac{2n}{n-2}}}{\Vert{\psi}\Vert^g_2}\right)^2k^{2/n}.
$$
where $C([g])$ is a constant depending only on the conformal class of $g$. 

\item b) If $M$ is a compact, orientable surface of genus $\gamma$ then :
$$
\lambda_k(H)-\lambda_1(H)\leq C\left( \dfrac{\norm{\psi}_{\infty}}{\Vert{\psi}\Vert^g_2}\right)^2(\gamma+1)k
$$
where $C$ is an absolute constant.

\end{thm}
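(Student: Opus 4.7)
\medskip
\textbf{Proof proposal.}
The plan is to reduce the theorem to Theorem \ref{mainthm2} by a min-max argument in which the trial sections are scalar multiples of the first eigensection $\psi$. Given a $k$-dimensional subspace $F\subset H^1(M)$, I form the subspace $F\cdot\psi:=\{u\psi:u\in F\}$ of sections of $E$. Since $\psi$ is a nontrivial solution of an elliptic equation, it vanishes on a set of measure zero, so $u\mapsto u\psi$ is injective and $\dim(F\cdot\psi)=k$. The whole argument rests on the identity
$$
\int_M\langle H(u\psi),u\psi\rangle\,dv_g=\int_M\vert\nabla^gu\vert^2\vert\psi\vert^2\,dv_g+\lambda_1(H)\int_Mu^2\vert\psi\vert^2\,dv_g,
$$
valid for every $u\in H^1(M)$, which says that the Rayleigh quotient of $u\psi$ with respect to $H$ equals $R_{\sigma,\nu}(u)+\lambda_1(H)$, with $\sigma=\vert\psi\vert^2$ and $\nu=\vert\psi\vert^2 dv_g$. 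Granted this identity, the first inequality $\lambda_k(H)-\lambda_1(H)\le\mu_k(\sigma,\nu)$ follows at once from the min-max principle applied to a subspace $F\subset H^1(M)$ that nearly achieves $\mu_k(\sigma,\nu)$.

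To prove the identity I combine two computations. First, the Leibniz rule $D(u\psi)=du\otimes\psi+u\,D\psi$ together with the metric compatibility of $D$, which gives $\langle\psi,D_i\psi\rangle=\frac12\partial_i\vert\psi\vert^2$, yields the pointwise formula
$$
\vert D(u\psi)\vert^2=\vert\nabla^gu\vert^2\vert\psi\vert^2+u\,\nabla^gu\cdot\nabla^g\vert\psi\vert^2+u^2\vert D\psi\vert^2.
$$
Second, pairing the eigenvalue equation $H\psi=\lambda_1(H)\psi$ with $u^2\psi$ and integrating by parts on the $D^*D$-term gives
$$
\int_Mu^2\bigl(\vert D\psi\vert^2+\langle T\psi,\psi\rangle\bigr)\,dv_g=\lambda_1(H)\int_Mu^2\vert\psi\vert^2\,dv_g-\frac12\int_M\nabla^g(u^2)\cdot\nabla^g\vert\psi\vert^2\,dv_g.
$$
Adding the two displays and using that $\frac12\nabla^g(u^2)=u\,\nabla^gu$, the crossterms involving $\nabla^g\vert\psi\vert^2$ cancel exactly, and the identity drops out.

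Once $\lambda_k(H)-\lambda_1(H)\le\mu_k(\vert\psi\vert^2,\vert\psi\vert^2 dv_g)$ is established, parts (a) and (b) follow immediately from Theorem \ref{mainthm2}, after observing that $\Vert\vert\psi\vert^2\Vert^g_{n/(n-2)}=(\Vert\psi\Vert^g_{2n/(n-2)})^2$, $\Vert\vert\psi\vert^2\Vert_\infty=\Vert\psi\Vert_\infty^2$, and $\nu(M)=(\Vert\psi\Vert^g_2)^2$. The main obstacle is the cancellation of the crossterms in the key identity: one must pair the eigenvalue equation with the precise test section $u^2\psi$ so that the potential $T$ and the curvature of $D$ both disappear from the final estimate. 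This cancellation is what allows the argument to go through uniformly for every operator of Laplace type, regardless of the specific form of $T$ and of the underlying bundle.
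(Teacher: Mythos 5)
Your proposal is correct and follows essentially the same route as the paper: restrict the test sections to $u\psi$ with $u\in H^1(M)$, establish the identity $\int_M\scal{H(u\psi)}{u\psi}dv_g=\int_M\big(u^2\scal{H\psi}{\psi}+\abs{\nabla u}^2\abs{\psi}^2\big)dv_g$, and then apply the min-max principle together with Theorem \ref{mainthm2}. The only difference is that you derive this identity directly from the Leibniz rule and an integration by parts (your computation, including the cancellation of the cross terms $u\,\nabla u\cdot\nabla\abs{\psi}^2$, is correct), whereas the paper simply cites it as Lemma 8 of \cite{ColboisSavo1}.
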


\begin{proof} As $H=D^{\star}D+T$, the quadratic form associated to $H$ is given by:
$$
{\Cal Q}(\psi)=\int_M\dotp{H\psi}{\psi}dv_g=\int_M\Big(\abs{D\psi}^2+\dotp{T\psi}{\psi}\Big)dv_g,
$$
where $\psi$ denotes a generic smooth section.  If $u$ is any Lipschitz function on $M$, then an integration by parts gives (see  \cite [Lemma 8]{ColboisSavo1}) 
$$
{\Cal Q}(u\psi)=\int_M\Big(u^2\scal{H\psi}{\psi}+\abs{\nabla u}^2\abs\psi^2\Big)dv_g.
$$
Now assume that $\psi$ is a first eigensection: $H\psi=\lambda_1(H)\psi$. Then we obtain:
$$
\dfrac{{\Cal Q}(u\psi)}{\int_Mu^2\abs{\psi}^2dv_g}= \lambda_1(H)+\dfrac{\int_M\abs{\nabla u}^2\abs\psi^2dv_g}{\int_Mu^2\abs{\psi}^2dv_g}
$$
for all Lipschitz functions $u$. Let $\sigma=\abs{\psi}^2$ and $\nu=\abs{\psi}^2dv_g$. 
Restricting the test-sections to sections of type $u\psi$, where $u$ is Lipschitz (hence in $ H^1(M)$)  and $\psi$ is a fixed first eigensection, an obvious application of the min-max principle gives:
$$
\lambda_k(H)\leq \lambda_1(H)+\mu_k(\sigma,\nu).
$$
The remaining part of the theorem is an immediate consequence of the last inequality and  Theorem \ref{mainthm1}.
\end{proof}

\begin{cor}\label{laplacetype1} Assume that a Laplace type operator  $H$   acting on sections of a Riemannian vector bundle $E$ over $(M,g)$, admits a first eigensection of constant length. Then,  for all $k\geq 1$  
$$
\lambda_k(H)- \lambda_1(H)\leq\lambda_k(\Delta_g).
$$

\end{cor}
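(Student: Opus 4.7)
The plan is to revisit the key intermediate step in the proof of Theorem \ref{laplacetype}, namely the inequality
$$
\lambda_k(H)\leq \lambda_1(H)+\mu_k(\sigma,\nu),
$$
which was established by using test-sections of the form $u\psi$ with $u\in H^1(M)$, where $\psi$ is a fixed first eigensection. This inequality was proved before invoking Theorem \ref{mainthm1}, so it is available unconditionally with $\sigma=\abs{\psi}^2$ and $\nu=\abs{\psi}^2\,dv_g$.

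Next, I would use the hypothesis that $\psi$ has constant length. This makes $\sigma=\abs{\psi}^2$ a (positive) constant $c$ on $M$, and the measure $\nu=c\,dv_g$ is simply a constant multiple of the Riemannian measure. In this situation the Rayleigh quotient appearing in the definition of $\mu_k(\sigma,\nu)$ reduces to
$$
R_{\sigma,\nu}(u)=\frac{c\int_M \vert \nabla^g u\vert^2\,dv_g}{c\int_M u^2\,dv_g}=\frac{\int_M \vert \nabla^g u\vert^2\,dv_g}{\int_M u^2\,dv_g},
$$
which is exactly the Rayleigh quotient of the Laplacian $\Delta_g$. Since the class $S_k$ of $k$-dimensional subspaces of $H^1(M)$ is the same in both variational problems, the min-max characterization gives $\mu_k(\sigma,\nu)=\lambda_k(\Delta_g)$.

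Combining these two observations yields the desired conclusion $\lambda_k(H)-\lambda_1(H)\leq \lambda_k(\Delta_g)$. There is no real obstacle here: the content of the corollary is that, once the ``weight'' $\abs{\psi}^2$ collapses to a constant, the generic bound of Theorem \ref{laplacetype} specializes to the eigenvalues of the plain Laplacian without any geometric constant appearing. The only thing to be careful about is checking that the min-max formula for $\Delta_g$ is indeed recovered (recall $M$ is closed in this section, so no boundary conditions intervene), and that the identity $\lambda_1(\Delta_g)=0$ is consistent with the case $k=1$, where the corollary reads $0\leq 0$.
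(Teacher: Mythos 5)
Your argument is correct and is essentially identical to the paper's: the corollary is obtained by combining the intermediate bound $\lambda_k(H)\leq\lambda_1(H)+\mu_k(\sigma,\nu)$ from Theorem \ref{laplacetype} with the observation that, for constant $\sigma$, the quantity $\mu_k(\sigma,\sigma dv_g)$ reduces via the min-max characterization to $\lambda_k(\Delta_g)$. Nothing further is needed.
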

Indeed, when $\sigma$ is constant, $\mu_k(\sigma,\sigma dv_g)$ is nothing but the $k$-th eigenvalue of the Laplacian $\Delta_g$ acting on functions. 
In the particular case where $H^{( p )}$ is the Hodge Laplacian acting on $p$-forms, Corollary \ref{laplacetype1} says that the existence of a nonzero harmonic $p$-form of constant length on $M$ leads to
$$\lambda_k(H^{( p )})\leq\lambda_k(\Delta_g)$$
for every positive integer $k$, which extends the result of Takahashi \cite{Takahashi}.


\section{Lower bounds for eigenvalues on weighted  Euclidean domains and  manifolds of revolution} \label{revolution}

The scope of this section is to show that our main upper bound is asymptotically sharp for some special weighted manifolds, namely convex Euclidean domains and revolution manifolds endowed with a Gaussian density. By a Gaussian density we mean a function of type
$$
\sigma_j(x)=e^{-j d(x,x_0)^2}
$$
where $x_0$ is a fixed point and $j$ is a positive integer (a slight modification is needed for closed revolution manifolds). We will give a lower bound of $\lambda_2(L_{\sigma_j})$ and verify that in all these cases both $\lambda_2(L_{\sigma_j})$ and our main upper bound  grow to infinity linearly in $j$ as $j\to\infty$. 
\smallskip

In what follows, we make use of the Reilly formula, recently extended to weighted manifolds (see \cite{MaDu}). Here  $M$ is a compact Riemannian manifold of dimension $n$ with smooth boundary $\bd M$; we let $\sigma=e^{-f}$ be  a positive smooth density and $u\in C^{\infty}(M)$. Then we have:
\begin{equation}\label{reilly}
\begin{aligned}
\int_{M}(L_{\sigma}u)^2\sigma&=\int_{M}\abs{\nabla^2u}^2\sigma+{\rm Ric}(\nabla u,\nabla u)\sigma+\nabla^2f(\nabla u,\nabla u)\sigma \\
&+ \int_{\bd M}2\pd uN L^{\bd M}_{\sigma}u\cdot \sigma+B(\nabla^{\bd M}u,\nabla^{\bd M}u)\sigma\\
&+\int_{\bd M}\Big((n-1)H+\pd fN\Big)\Big(\pd uN\Big)^2\sigma
\end{aligned}
\end{equation}
where $N$ is the inner unit normal, $B$ the second fundamental form of $\bd M$ with respect to $N$ and $(n-1)H={\rm tr}B$ is the mean curvature. By $L^{\bd M}_{\sigma}$ we denote the induced operator on $\bd M$, naturally defined as
$$
L^{\bd M}_{\sigma}u=\Delta^{\bd M}u+\scal{\nabla^{\bd M}f}{\nabla^{\bd M}u}.
$$
When not explicitly indicated, integration is taken with respect to the canonical Riemannian measure. 


\subsection{Convex Euclidean domains} Here is the main statement of this section.

\begin{thm}\label{convex}
 Let $\Omega$ be a convex domain of $\real n$ containing the origin, endowed with the Gaussian density
$
\sigma_j(x)=e^{-j\abs{x}^2}.
$
There exists a constant $K_{n,R}$ depending only on $n$ and $R={\rm dist}(O,\bd\Omega)$ such that, for all $j\geq K_{n,R}$,  
\begin{equation}\label{convexineq}
\dfrac{\norm{\sigma_j}_{\frac{n}{n-2}}}{\norm{\sigma_j}_1}\leq \lambda_2(L_{\sigma_j})\leq B_n\dfrac{\norm{\sigma_j}_{\frac{n}{n-2}}}{\norm{\sigma_j}_1}
\end{equation}
where $B_n$ is a constant that  depends only on $n$.
Moreover,  $\lambda_2(L_{\sigma_j})$ tends to infinity with a linear growth as $j\to\infty$.

\end{thm}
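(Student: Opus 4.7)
The upper bound is obtained by specializing Theorem~\ref{mainthm1}(I) to $(\Omega, g_0)$, where $g_0$ is the Euclidean metric. Since $g_0$ has zero Ricci curvature and $\Omega$ is a convex subset of $\mathbb{R}^n$, the two quantities that enter the proof of Theorem~\ref{mainthm2} can be bounded purely in terms of $n$: one has $\Gamma(g_0) \le \omega_n$ because balls centered in $\Omega$ are contained in Euclidean balls, and the covering constant $N(\Omega, d_0)$ is bounded by the corresponding covering constant of $\mathbb{R}^n$. This yields $C([g_0]) \le B_n$ with $B_n = B_n(n)$.

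For the lower bound, the central step is the weighted Reilly formula \eqref{reilly}. Let $u$ be a Neumann eigenfunction associated with $\lambda := \lambda_2(L_{\sigma_j})$, so $L_{\sigma_j}u = \lambda u$ and $\int_\Omega u\,\sigma_j\,dv_g = 0$. With $f = j\abs{x}^2$ we have $\mathrm{Ric} = 0$ and $\nabla^2 f = 2j\,g_0$, whence $\nabla^2 f(\nabla u, \nabla u) = 2j\abs{\nabla u}^2$. The Neumann condition kills the first and third boundary terms in \eqref{reilly}, and the surviving boundary integral
\[
\int_{\partial\Omega} B(\nabla^{\partial\Omega} u, \nabla^{\partial\Omega} u)\,\sigma_j
\]
is nonnegative, because the second fundamental form of $\partial\Omega$ with respect to the inner unit normal is positive semi-definite by convexity. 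Dropping this term together with the nonnegative $\int_\Omega \abs{\nabla^2 u}^2\,\sigma_j$, one gets
\[
\lambda^2\!\int_\Omega u^2\,\sigma_j = \int_\Omega (L_{\sigma_j}u)^2\,\sigma_j \;\ge\; 2j\!\int_\Omega \abs{\nabla u}^2\,\sigma_j = 2j\lambda\!\int_\Omega u^2\,\sigma_j,
\]
and therefore $\lambda_2(L_{\sigma_j}) \ge 2j$.

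It remains to compare $2j$ with the ratio on the right-hand side of \eqref{convexineq}. Because $\Omega \supset B(0,R)$ and the Gaussian mass outside $B(0,R)$ is at most $\abs{\Omega}\,e^{-jR^2}$, the rescaling $y = \sqrt{j}\,x$ gives
\[
\|\sigma_j\|_1 = (\pi/j)^{n/2}\bigl(1+o(1)\bigr), \qquad \|\sigma_j\|_{n/(n-2)} = \bigl(\pi(n-2)/(jn)\bigr)^{(n-2)/2}\bigl(1+o(1)\bigr)
\]
as $j \to \infty$, so that the ratio is asymptotic to $C_n j$ with $C_n = \pi^{-1}\bigl((n-2)/n\bigr)^{(n-2)/2} < 1$. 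Since $C_n < 2$, extracting an explicit threshold $K_{n,R}$ from the exponentially small boundary correction ensures, for all $j \ge K_{n,R}$, the chain $\|\sigma_j\|_{n/(n-2)}/\|\sigma_j\|_1 \le 2j \le \lambda_2(L_{\sigma_j})$, and the linear growth of $\lambda_2(L_{\sigma_j})$ follows immediately from $\lambda_2(L_{\sigma_j})\ge 2j$.

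The delicate point is the Reilly computation: one must carefully check that the Neumann boundary condition annihilates every term in \eqref{reilly} containing $\partial u/\partial N$ and that the surviving boundary integral has the correct sign on a convex domain. The dimensional control of $B_n$ and the Laplace-type asymptotics underlying $K_{n,R}$ are then essentially routine.
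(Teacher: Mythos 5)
Your proposal is correct and follows essentially the same route as the paper: the upper bound comes from Theorem \ref{mainthm1} with a dimensional constant for the flat metric, the lower bound $\lambda_2(L_{\sigma_j})\geq 2j$ comes from the weighted Reilly formula using $\nabla^2 f=2j\,I$ and $B\geq 0$ on a convex boundary, and the comparison with the norm ratio comes from explicit Gaussian integrals plus the exponentially small correction coming from $\abs{x}\geq R$ off $\Omega$ (the paper bounds $\int_{\Omega^c}e^{-j\abs{x}^2}\leq D_n e^{-jR^2/2}$ rather than your $\abs{\Omega}e^{-jR^2}$, but this is immaterial).
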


The proof follows from Theorem \ref{mainthm1}  and  the following estimates. 
\begin{prop}\label{lower}
Let $\Omega$ be a convex domain in $\real n$. Then, for all  $j>0$,
$$
\lambda_2(L_{\sigma_j})\geq 2j.
$$
\end{prop}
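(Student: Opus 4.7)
The plan is to apply the weighted Reilly formula \eqref{reilly} directly to a first non-constant Neumann eigenfunction $u$ of $L_{\sigma_j}$ on $\Omega$, and to extract the bound from the Bakry--Emery curvature term it produces. Denoting $\lambda=\lambda_2(L_{\sigma_j})$, I would use on the left-hand side the identities $\int_{\Omega}(L_{\sigma_j}u)^2\sigma_j=\lambda^2\int_{\Omega}u^2\sigma_j$ (from $L_{\sigma_j}u=\lambda u$) and $\int_{\Omega}|\nabla u|^2\sigma_j=\lambda\int_{\Omega}u^2\sigma_j$ (from the integration by parts recalled in the introduction, which uses precisely the Neumann condition $\partial u/\partial N=0$).

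The right-hand side of \eqref{reilly} would then be simplified by the following observations, in order. First, since $\Omega\subset\mathbb{R}^n$ is flat, $\mathrm{Ric}\equiv 0$. Second, the Gaussian density has $f(x)=j|x|^2$, hence $\nabla^2 f=2j\, g$ throughout $\Omega$, so $\nabla^2 f(\nabla u,\nabla u)=2j|\nabla u|^2$ --- this is exactly where the factor $2j$ in the claim comes from. Third, the Hessian term $|\nabla^2 u|^2\sigma_j$ is nonnegative and can simply be dropped. On the boundary, the Neumann condition kills the entire third line of \eqref{reilly} and the first summand of the second line, leaving only $\int_{\partial\Omega} B(\nabla^{\partial\Omega}u,\nabla^{\partial\Omega}u)\sigma_j$; convexity of $\Omega$ forces $B\geq 0$ with respect to the inner unit normal (this is consistent with the sign convention making $(n-1)H=\mathrm{tr}\,B>0$ for spheres), so this boundary term is $\geq 0$ as well.

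Putting these together, \eqref{reilly} reduces to $\lambda^2\int_\Omega u^2\sigma_j\geq 2j\int_\Omega |\nabla u|^2\sigma_j=2j\lambda\int_\Omega u^2\sigma_j$, and dividing by $\lambda\int_\Omega u^2\sigma_j>0$ yields $\lambda\geq 2j$. The only delicate point is the sign of the remaining boundary term $B(\nabla^{\partial\Omega}u,\nabla^{\partial\Omega}u)$ under convexity with the paper's sign convention for $B$ and $N$; this is essentially the sole obstacle and is settled by the standard supporting-hyperplane/ball computation. Notice that the location of the origin relative to $\Omega$ never enters the argument, so the bound holds for any convex $\Omega\subset\mathbb{R}^n$, exactly as stated.
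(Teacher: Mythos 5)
Your proof is correct and is essentially the paper's own: apply the weighted Reilly formula \eqref{reilly} to an eigenfunction of $\lambda_2(L_{\sigma_j})$, note ${\rm Ric}\equiv 0$, $\nabla^2 f=2jI$, drop the nonnegative Hessian term, and use the Neumann condition together with $B\geq 0$ (from convexity, with the paper's inner-normal sign convention) to kill or discard all boundary contributions. The only difference is that you spell out each cancellation and the sign check for $B$ in more detail, and you add the (correct) remark that the position of the origin is irrelevant.
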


\begin{proof} We apply the Reilly formula \eqref{reilly} to an eigenfunction $u$ associated to $\lambda=\lambda_2(L_{\sigma_j})$. By definition, $f=j\abs{x}^2$ so that $\nabla^2f=2j I$. As $\pd{u}{N}=0$ (Neumann condition) and $B\geq 0$ by the convexity assumption, we arrive at
$$
\lambda^2\int_{\Omega}u^2\sigma\geq 2j\int_{\Omega}\abs{\nabla u}^2\sigma=2j\lambda\int_{\Omega}u^2\sigma,
$$
and the inequality follows.
\end{proof}

\begin{lemme}\label{sigmabounds} Let $j\in\N^*$. 
\item a) For all $p>1$ one has $\norm{\sigma_j}_p\leq \dfrac{C_{n,p}}{j^{\frac{n}{2p}}}$. In particular
$
\norm{\sigma_j}_{\frac{n}{n-2}}\leq \dfrac{C_n}{j^{\frac {n-2}{2}}}.
$

\item b) There exists  a constant $K_{n,R}$ depending only on $n$ and $R$ such that, if $j\geq K_{n,R}$, then
$
\norm{\sigma_j}_1\geq\dfrac{C_n}{2j^{\frac{n}{2}}}.
$
\end{lemme}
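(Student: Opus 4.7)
The plan is to prove each bound by an elementary Gaussian computation; the convexity of $\Omega$ plays a role only through the inclusion $B(O,R)\subset\Omega$.

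For part (a), since $\sigma_j^p$ is nonnegative, I would simply enlarge the domain of integration from $\Omega$ to all of $\real n$ and invoke the standard Gaussian evaluation:
$$
\norm{\sigma_j}_p^p=\int_{\Omega}e^{-jp\abs{x}^2}\,dx\le\int_{\real n}e^{-jp\abs{x}^2}\,dx=\Big(\dfrac{\pi}{jp}\Big)^{n/2},
$$
either directly or via the substitution $y=\sqrt{jp}\,x$. Taking $p$-th roots yields $\norm{\sigma_j}_p\le C_{n,p}\,j^{-n/(2p)}$ with $C_{n,p}=(\pi/p)^{n/(2p)}$; specialising to $p=\tfrac{n}{n-2}$ gives the displayed consequence $\norm{\sigma_j}_{\frac{n}{n-2}}\le C_n\,j^{-(n-2)/2}$.

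For part (b), I would use that $\Omega$ convex and containing $O$ with $\dist(O,\bd\Omega)=R$ implies $B(O,R)\subset\Omega$. Hence
$$
\norm{\sigma_j}_1\ge\int_{B(O,R)}e^{-j\abs{x}^2}\,dx=j^{-n/2}\int_{B(O,\sqrt{j}\,R)}e^{-\abs{y}^2}\,dy,
$$
after the rescaling $y=\sqrt{j}\,x$. As $j\to\infty$, the remaining integral increases monotonically to $\int_{\real n}e^{-\abs{y}^2}\,dy=\pi^{n/2}$. Consequently there is a threshold $K_{n,R}$, depending only on $n$ and $R$ (explicitly determined by the condition that the Gaussian tail outside $B(O,\sqrt{j}\,R)$ be at most $\pi^{n/2}/2$), such that for $j\ge K_{n,R}$ the integral exceeds $\pi^{n/2}/2$. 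This gives $\norm{\sigma_j}_1\ge C_n/(2j^{n/2})$ with $C_n=\pi^{n/2}$.

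No real obstacle is expected: both statements follow from the explicit value of the Gaussian integral together with the elementary inclusion $B(O,R)\subset\Omega$. The only mildly technical point is producing the threshold $K_{n,R}$ in part (b); one can make it quantitative by observing that $\int_{\abs{y}\ge\rho}e^{-\abs{y}^2}dy$ decays like $\rho^{n-2}e^{-\rho^2}$, so any $j$ with $\sqrt{j}\,R$ larger than an explicit constant depending on $n$ suffices. The use of convexity is minimal here and serves only to ensure that the Euclidean ball of radius $R$ sits inside $\Omega$; star-shapedness with respect to $O$ would already suffice.
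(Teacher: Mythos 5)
Your proof is correct and follows essentially the same route as the paper. Part (a) is identical: enlarge the domain of integration to $\real n$ and use the exact value of the Gaussian integral. In part (b) the paper writes $\int_\Omega = \int_{\real n} - \int_{\Omega^c}$ and bounds $\int_{\Omega^c}e^{-j|x|^2}\,dx$ from above using $|x|\ge R$ on $\Omega^c$; you instead lower-bound $\int_\Omega$ by $\int_{B(O,R)}$ and rescale. These are two phrasings of the same estimate (the paper's bound on $\Omega^c$ already factors through $\real n\setminus B(O,R)$), and your observation that only $B(O,R)\subset\Omega$ is needed (not full convexity of $\Omega$) is accurate; convexity is used elsewhere in the section, in the Reilly-formula argument.
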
 

The constants $C_n,C_{n,p}$ will be given explicitely in the proof.  
From the above facts we obtain for $j\geq K_{n,R}$,
$$
\dfrac{\norm{\sigma_j}_{\frac{n}{n-2}}}{\norm{\sigma_j}_1}\leq  2j
\leq \lambda_2(L_{\sigma_j}).
$$


\begin{proof}[Proof of Lemma \ref{sigmabounds}]  We use the  well-known formula
$$
\int_0^{\infty}r^{n-1}e^{-jr^2}\,dr=\dfrac{\Gamma(\frac n2)}{2j^{\frac n2}},
$$
where $\Gamma$ is the Gamma function.
Then
$$
\begin{aligned}
\int_{\real n}e^{-j\abs{x}^2}\,dx =
 \abs{\sphere{n-1}}\int_0^{\infty}r^{n-1}e^{-jr^2}\,dr = \dfrac{C_n}{j^{\frac n2}}.
\end{aligned}
$$
with $C_n=\dfrac 12\abs{\sphere{n-1}}\Gamma(\frac n2)$.
This implies that
$$
\norm{\sigma_j}_p=\Big(\int_{\Omega}e^{-jp\abs{x}^2}\,dx\Big)^{\frac 1p}\leq 
\Big(\int_{\real n}e^{-jp\abs{x}^2}\,dx\Big)^{\frac 1p}=\dfrac{C_{n,p}}{j^{\frac{n}{2p}}}
$$
with $C_{n,p}=\dfrac{(C_n)^{\frac 1p}}{p^{\frac{n}{2p}}}$, which proves a). Now,  one has
$$
\begin{aligned}
\norm{\sigma_j}_1&=\int_{\Omega}e^{-j\abs{x}^2}\,dx=\int_{\real n}e^{-j\abs{x}^2}\,dx-\int_{\Omega^c}e^{-j\abs{x}^2}\,dx=\dfrac{C_n}{j^{\frac n2}}-\int_{\Omega^c}e^{-j\abs{x}^2}\,dx.
\end{aligned}
$$
On the complement of $\Omega$ one has $\abs{x}\geq R$ by the definition of $R$.  Then,
$$
\begin{aligned}
\int_{\Omega^c}e^{-j\abs{x}^2}\,dx &=\int_{\Omega^c}e^{-\frac{j}{2}\abs{x}^2}e^{-\frac{j}{2}\abs{x}^2}\,dx\leq e^{-\frac{jR^2}{2}}\int_{\real n}e^{-\frac{\abs{x}^2}{2}}\,dx= D_n e^{-\frac{jR^2}{2}}.
\end{aligned}
$$
As $R$ is fixed and $j^{\frac n2}e^{-\frac{jR^2}{2}}$ tends to zero when $j$ tends to infinity, one sees immediately that there exists  $K_{n,R}$ such that, for $j\geq K_{n,R}$, 
$$
\int_{\Omega^c}e^{-j\abs{x}^2}\,dx\leq D_n e^{-\frac{jR^2}{2}}\leq \dfrac{C_n}{2j^{\frac n2}}.
$$
In that range of $j$ one indeed has
$$
\norm{\sigma_j}_1\geq \dfrac{C_n}{2j^{\frac n2}}.
$$
\end{proof}


\subsection{Revolution manifolds with boundary}

A {\it revolution manifold with boundary} of dimension $n$ is a Riemannian manifold $(M,g)$ with a distinguished point $N$ such that $(M\setminus\{N\}, g)$ is isometric to $(0,R]\times \sphere {n-1}$ endowed with the metric
$$
g=dr^2+\theta^2(r)g_{\sphere {n-1}},
$$
$g_{\sphere {n-1}}$ denoting the standard metric on the sphere. Here $\theta (r)$ is a smooth function on $[0,R]$ which is positive on $(0,R]$ (note that we assume in particular $\theta(R)>0$) and is such that:
$$
\theta(0)=\theta''(0)=0, \theta'(0)=1.
$$
We notice that, as $R<+\infty$,  $M$ is compact, connected and has a  smooth boundary $\bd M=\{R\}\times\sphere{n-1}$  isometric to the $(n-1)-$dimensional sphere of  radius $\theta (R)$. If we make the stronger assumption that $\theta$ has vanishing even derivatives at zero then the metric  is $C^{\infty}$-smooth everywhere. Given a Gaussian radial density of the form $\sigma_j(r)=e^{-jr^2}$  centered at the pole $N$, we wish to study the first non-zero eigenvalue $\lambda_2(L_{\sigma_j})$ of $L_{\sigma_j}$, with Neumann  boundary conditions.

\begin{thm} \label{rev} Let $(M,g)$ be a revolution manifold with boundary endowed with the Gaussian density 
$\sigma_j(r)=e^{-jr^2}$,  $j\geq 1$. 
Then there is a (possibly negative) constant $C$, not depending on $j$, such that
$$
\lambda_2(L_{\sigma_j})\geq 2j+C.
$$
Moreover, if $(M,g)$ has non-negative Ricci curvature, then $\lambda_2(L_{\sigma_j})\geq 2j$.
\end{thm}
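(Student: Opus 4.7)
Proof proposal. The plan is to apply the weighted Reilly formula \eqref{reilly} to an eigenfunction $u$ of $L_{\sigma_j}$ corresponding to $\lambda := \lambda_2(L_{\sigma_j})$ with Neumann boundary conditions, in the spirit of Proposition \ref{lower}. First I would compute the relevant geometric data on the warped product $g = dr^2 + \theta^2(r)\, g_{\Sp^{n-1}}$. In an orthonormal frame $(\partial_r, e_2,\dots,e_n)$ with $e_i$ tangent to $\{r\}\times\Sp^{n-1}$, a direct calculation yields $\nabla^2 f(\partial_r,\partial_r) = 2j$ and $\nabla^2 f(e_i,e_i) = 2j\alpha(r)$, where $\alpha(r) := r\theta'(r)/\theta(r)$. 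The hypotheses $\theta(0)=\theta''(0)=0$ and $\theta'(0)=1$ ensure that $\alpha$ extends smoothly to $[0,R]$ with $\alpha(0)=1$ and $1-\alpha(r)=O(r^2)$ near the pole. The inward unit normal on $\bd M$ is $N = -\partial_r$, so the second fundamental form is $B = -(\theta'(R)/\theta(R))\, g_{\bd M}$ and $\partial f/\partial N = -2jR$.

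Plugging these into \eqref{reilly}, the Neumann condition $\partial u/\partial N = 0$ kills the two boundary terms involving $\partial u/\partial N$. Dropping $\int|\nabla^2 u|^2\sigma \geq 0$, decomposing $\nabla u = u_r\,\partial_r + \nabla^S u$ into radial and tangential (spherical) parts, and using $\int|\nabla u|^2\sigma = \lambda\int u^2\sigma$ together with $\nabla^2 f(\nabla u,\nabla u) = 2j|\nabla u|^2 - 2j(1-\alpha(r))|\nabla^S u|^2$, one arrives at
\begin{equation*}
\lambda(\lambda-2j)\int_M u^2\sigma \;\geq\; \int_M\mathrm{Ric}(\nabla u,\nabla u)\sigma \;-\; 2j\int_M(1-\alpha(r))|\nabla^S u|^2\sigma \;-\; \frac{\theta'(R)}{\theta(R)}\int_{\bd M}|\nabla^{\bd M}u|^2\sigma.
\end{equation*}
The task is then to bound each term on the right by $C\lambda\int u^2\sigma$ with $C$ independent of $j$. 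The Ricci contribution is controlled by $\|\mathrm{Ric}^-\|_\infty$. For the bulk deficit, combine the pointwise $1-\alpha(r)\leq C_0 r^2$ with the Gaussian scaling identity $\int_M jr^2\sigma_j\,dv_g \leq C_n\int_M \sigma_j\,dv_g$ (proved via the substitution $s = \sqrt{j}\,r$, as in Lemma \ref{sigmabounds}): this makes $2j(1-\alpha)$ uniformly bounded in $L^1(\sigma_j\,dv_g/\|\sigma_j\|_1)$, which together with $|\nabla^S u|^2 \leq |\nabla u|^2$ yields the required estimate. The boundary term is tamed by the exponential smallness $\sigma_j|_{\bd M} = e^{-jR^2}$ and a standard trace inequality. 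Combining these gives $\lambda \geq 2j - C$, the first assertion.

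For the non-negative Ricci case, the first term on the right becomes non-negative and can be dropped; in addition, $\mathrm{Ric}(\partial_r,\partial_r)\geq 0$ forces $\theta''\leq 0$, so $\alpha(r)\leq 1$ everywhere and $\theta'$ is decreasing from $1$. To obtain the sharp bound $\lambda_2\geq 2j$, I would pass to separation of variables $u = \sum_{l\geq 0} u_l(r) Y_l(\omega)$ into spherical-harmonic sectors, reducing to a family of 1D operators. For $l=0$, retaining the radial Hessian $|\nabla^2 u|^2 = u_{rr}^2+(n-1)(\theta'/\theta)^2 u_r^2$ in the Bochner identity, together with $\mathrm{Ric}_\sigma(\partial_r,\partial_r) \geq 2j$, yields $\lambda_2^{(0)}\geq 2j$ at once. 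For $l\geq 1$, the reduced 1D Rayleigh quotient acquires the extra positive potential $l(l+n-2)/\theta^2(r)$, and a comparison with the Euclidean model $\theta = r$ (where the extremizer $R(r)=r$ achieves $\lambda_1^{(1)} = 2j$, cf.\ the proof of Theorem \ref{convex}) combined with the concavity of $\theta$ shows $\lambda_1^{(l)}\geq 2j$. Taking the minimum over $l$ gives $\lambda_2\geq 2j$.

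The main obstacle is the uniform-in-$j$ absorption of the bulk deficit $2j\int(1-\alpha)|\nabla^S u|^2\sigma$: its magnitude is of the same order as the main term $2j\lambda\int u^2\sigma$, and only the exact matching between the $O(r^2)$ vanishing of $1-\alpha$ at the pole and the Gaussian concentration scale $r\sim j^{-1/2}$ saves the argument. A secondary subtlety is the boundary contribution, whose sign depends on the sign of $\theta'(R)$, but which is tamed by the exponential factor $e^{-jR^2}$ in the density on $\bd M$.
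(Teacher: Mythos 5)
Your overall strategy (Reilly formula plus control of the error terms) is in the spirit of the paper, and your computation $\nabla^2 f(\nabla u,\nabla u)=2j\abs{\nabla u}^2-2j(1-\alpha(r))\abs{\nabla^S u}^2$ with $\alpha=r\theta'/\theta$ is correct. But the step you yourself flag as the main obstacle is a genuine gap, not a subtlety that "saves the argument". To absorb $2j\int_M(1-\alpha)\abs{\nabla^S u}^2\sigma_j$ into $C\int_M\abs{\nabla u}^2\sigma_j=C\lambda\int_M u^2\sigma_j$ you need a \emph{pointwise} bound $2j(1-\alpha(r))\leq C$, and what you have is $2j(1-\alpha(r))\leq 2jC_0r^2$, which is of order $j$ away from the pole. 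The scaling identity $\int_M jr^2\sigma_j\,dv_g\leq C_n\int_M\sigma_j\,dv_g$ controls the weight in $L^1(\sigma_j dv_g)$, but that only yields the absorption if $\abs{\nabla u}^2$ were bounded in $L^\infty$ relative to its $\sigma_j$-average, which is not known; a priori $\abs{\nabla u}^2\sigma_j$ could concentrate where $2jr^2$ is large. Making this rigorous would require Agmon-type concentration estimates for the eigenfunction, which you do not supply. The boundary term has a similar problem: a trace inequality bounds $\int_{\bd M}\abs{\nabla^{\bd M}u}^2$ by interior Sobolev norms (including second derivatives, which you have already discarded), and the bookkeeping of the factor $e^{jR^2}$ against the trace constant is not carried out.

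The paper avoids both difficulties by separating variables \emph{before} invoking Reilly. By a nodal-domain argument (Lemma \ref{twocase}), an eigenfunction for $\lambda_2$ is either radial or of the form $\phi(r)\xi(x)$ with $\xi$ a first spherical harmonic. In the radial case $\nabla^S u\equiv 0$, so your problematic deficit term vanishes identically and $\nabla^{\bd M}u=0$ kills the boundary term; Reilly then gives $\lambda\geq 2j+C$ at once (Lemma \ref{radial}). In the non-radial case the paper does not use Reilly at all: it substitutes $\phi=ry$ in the Sturm--Liouville equation, which puts the ODE in divergence form $(\beta y')'+(\lambda-2j+(n-1)\tfrac{\theta'\theta-r}{r\theta^2})\beta y=0$ and yields $\lambda\geq 2j+C_2$ by one integration by parts, with $C_2\geq 0$ when ${\rm Ric}\geq 0$ (Lemma \ref{nonrad}). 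Your sketch for the non-negative Ricci case gestures at this sector decomposition and a comparison with the Euclidean model, but the comparison is not carried out; the explicit substitution $\phi=ry$ is the missing device. As written, the proposal does not constitute a proof.
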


We start the proof by making general considerations which are valid for any radial density $\sigma(r)=e^{-f(r)}$ (Lemma \ref{twocase} and \ref{radial}).  As for  the usual Laplacian, we can separate variables and prove that there is an orthonormal basis of $L^2(\sigma)$ made of eigenfunctions of type
\begin{equation}\label{separated}
u(r,x)=\phi(r)\xi(x)
\end{equation}
where $\phi$ is a smooth function of $r\in [0,R]$ and $\xi(x)$ is an eigenfunction of the Laplacian on $\sphere {n-1}$. Listing the  eigenvalues of $\sphere {n-1}$ as $\{\mu_k\}$, where $k=1,2,\dots$, and computing the Laplacian, one arrives at 
$$
\Delta u(r,x)=\Big(-\phi''(r)-(n-1)\dfrac{\theta'(r)}{\theta(r)}\phi'(r)+\dfrac{\mu_k}{\theta(r)^2}\phi(r)\Big)\xi(x).
$$
As $f=f(r)$ is radial one has $\scal{\nabla f}{\nabla u}(r,x)=f'(r)\phi'(r)\xi(x)$ so that
\begin{equation}\label{dlap}
L_{\sigma}u(r,x)=\Big(-\phi''(r)-(n-1)\dfrac{\theta'(r)}{\theta(r)}\phi'(r)+f'(r)\phi'(r)+\dfrac{\mu_k}{\theta(r)^2}\phi(r)\Big)\xi(x)
\end{equation}
Let us now focus on the first positive eigenvalue $\lambda_2(L_{\sigma})$ with associated eigenfunction $u$ of the form \eqref{separated}. There are only two cases to examine:

\nero either $\mu_k=\mu_1=0$, so that $\xi(x)$ is constant and the eigenfunction $u$ is {\it radial},

\nero or $\mu_k=\mu_2=n-1$, the first positive eigenvalue of $\sphere {n-1}$. \\
In fact, higher eigenvalues of $\sphere {n-1}$ do not occur, otherwise $u$ would have too many nodal domains. We summarize this alternative in the following lemma. 

\begin{lemme}\label{twocase} Let $M=(0,R]\times\sphere {n-1}$ be a manifold of revolution as above, endowed with a radial density  $\sigma(r)=e^{-f(r)}$. Then, either $L_{\sigma}$ admits a (Neumann) radial eigenfunction associated to $\lambda_2(L_{\sigma})$, or $\lambda_2(L_{\sigma})$ is the first positive eigenvalue of the problem
$$
\twosystem
{\phi''+\Big((n-1)\dfrac{\theta'}{\theta}-f'\Big)\phi'+\Big(\lambda-\dfrac{n-1}{\theta^2}\Big)\phi=0}
{\phi(0)=\phi'(R)=0}
$$
\end{lemme}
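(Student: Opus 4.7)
The plan is to exploit the $O(n-1)$ symmetry of the problem, separating variables so as to reduce the spectral analysis of $L_\sigma$ to a family of Sturm-Liouville problems indexed by the eigenvalues of the sphere.

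First I would decompose $L^2(M,\sigma\,dv_g)$ as a Hilbert sum of subspaces $H_k$ spanned by products $\phi(r)\xi(x)$ with $\xi$ in the $\mu_k$-eigenspace of $\Delta_{\sphere{n-1}}$ (here $\mu_1=0$, $\mu_2=n-1$, and so on). Formula \eqref{dlap} shows that each $H_k$ is invariant under $L_\sigma$, and that the eigenvalue equation $L_\sigma(\phi\xi)=\lambda\phi\xi$ on such a product reduces to the Sturm-Liouville ODE
$$
\phi''+\Big((n-1)\dfrac{\theta'}{\theta}-f'\Big)\phi'+\Big(\lambda-\dfrac{\mu_k}{\theta^2}\Big)\phi=0,
$$
with the Neumann condition $\phi'(R)=0$ and, when $k\geq 2$, the additional condition $\phi(0)=0$ coming from smoothness at the pole (since $\theta(r)\sim r$ there and $\xi$ is non-constant). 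Standard one-dimensional theory then produces, for each $k$, an increasing sequence of simple eigenvalues $\lambda_1^{(k)}<\lambda_2^{(k)}<\cdots$ whose union exhausts $\mathrm{Spec}(L_\sigma)$.

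Since $\lambda_1^{(1)}=0$ is attained only by the constant function, one has
$$
\lambda_2(L_\sigma)=\min\Big\{\lambda_2^{(1)},\ \min_{k\geq 2}\lambda_1^{(k)}\Big\}.
$$
The value $\lambda_2^{(1)}$ corresponds to a radial eigenfunction, which is the first alternative of the lemma. For the second alternative I would verify the monotonicity of $k\mapsto\lambda_1^{(k)}$: written in Rayleigh quotient form with weight $\theta^{n-1}e^{-f}$, namely
$$
\lambda_1^{(k)}=\inf_{\phi\not\equiv 0,\ \phi(0)=0}\dfrac{\int_0^R\big((\phi')^2+\mu_k\theta^{-2}\phi^2\big)\theta^{n-1}e^{-f}\,dr}{\int_0^R\phi^2\,\theta^{n-1}e^{-f}\,dr},
$$
the term $\mu_k/\theta^2$ acts as a non-negative potential whose coefficient grows with $k$, so $\lambda_1^{(k)}$ is non-decreasing in $\mu_k$. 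Hence $\min_{k\geq 2}\lambda_1^{(k)}=\lambda_1^{(2)}$, which is precisely the first positive eigenvalue of the ODE with $\mu_k=n-1$ and boundary conditions $\phi(0)=\phi'(R)=0$ stated in the lemma.

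The most delicate point I foresee is identifying the boundary condition at the pole in the non-radial case: near $r=0$ the revolution manifold looks Euclidean, and a smooth function whose angular part lies in the $\mu_k$-eigenspace must vanish at $r=0$ (in fact to order $r^k$). Once this regularity statement is in hand, the rest is a standard exercise in one-dimensional spectral theory, which also legitimizes the variational characterization used in the monotonicity argument.
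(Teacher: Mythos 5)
Your proposal is correct and follows the same separation-of-variables framework as the paper: decompose $L^2(M,\sigma dv_g)$ into the $L_\sigma$-invariant subspaces attached to the spherical eigenvalues $\mu_k$, reduce each to a Sturm--Liouville problem with weight $\theta^{n-1}e^{-f}$, impose $\phi'(R)=0$ from the Neumann condition and $\phi(0)=0$ at the pole for $k\geq 2$, and observe that $\lambda_2(L_\sigma)=\min\{\lambda_2^{(1)},\min_{k\geq2}\lambda_1^{(k)}\}$. Where you genuinely diverge is in ruling out the modes $k\geq 3$: the paper invokes Courant's nodal domain theorem (a product $\phi(r)\xi(x)$ with $\xi$ a higher spherical harmonic would give the second eigenfunction more than two nodal domains), whereas you compare Rayleigh quotients, noting that the potential $\mu_k\theta^{-2}\geq 0$ is increasing in $k$ over a common class of test functions, so $\lambda_1^{(k)}$ is non-decreasing and the minimum over $k\geq2$ is attained at $\mu_2=n-1$. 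Your route is arguably cleaner: it is purely variational, avoids counting nodal domains of products (where one must separately argue that the radial factor of the lowest mode does not change sign), and the monotonicity is immediate since enlarging a non-negative potential can only raise the infimum even if the form domains shrink. The paper's nodal argument, on the other hand, is shorter to state and gives the slightly finer information that \emph{no} eigenfunction of $\lambda_2(L_\sigma)$ can involve $\mu_k$ with $k\geq3$, not merely that such modes do not realize the minimum. The one point you rightly flag as delicate --- that regularity at the pole forces $\phi(0)=0$ (indeed vanishing to the order of the degree of the harmonic) and that the singular Sturm--Liouville problems admit the stated variational characterization --- is treated no more carefully in the paper than in your sketch, so nothing is missing relative to the published argument.
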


Note that the condition $\phi'(R)=0$ follows from the Neumann boundary condition, while the condition $\phi(0)=0$ is imposed to insure that the eigenfunction is continuous at the pole $N$.

We then prove the following lower bound for the "radial spectrum".

\begin{lemme}\label{radial} In the hypothesis of Lemma \ref{twocase}, assume that $L_{\sigma}$ admits a  a (Neumann) radial  eigenfunction   associated to the eigenvalue $\lambda$. Then
$$
\lambda\geq \inf_{(0,R)}\Big\{(n-1)\Big(\frac{\theta'}{\theta}\Big)^2+{\rm Ric_0}+f''\Big\},
$$
where $\rm Ric_0$ is a lower bound of the Ricci curvature of $M$.
\end{lemme}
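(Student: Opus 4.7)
The plan is to apply the weighted Reilly formula \eqref{reilly} to a radial Neumann eigenfunction $u(r,x)=\phi(r)$ of $L_\sigma$ associated to $\lambda$. First I would check that all three boundary integrals in \eqref{reilly} vanish: the inward unit normal at $\partial M=\{R\}\times\mathbb{S}^{n-1}$ is $N=-\partial_r$, so the Neumann condition forces $\partial u/\partial N=-\phi'(R)=0$, which kills the first and third boundary terms. Radiality of $u$ gives $\nabla^{\partial M}u=0$ on the slice $\{R\}\times\mathbb{S}^{n-1}$, which also annihilates the middle boundary term $B(\nabla^{\partial M}u,\nabla^{\partial M}u)$ as well as the factor $L^{\partial M}_\sigma u$.

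Next I would exploit the warped product structure to identify the bulk terms. In an orthonormal frame $\{\partial_r,e_1,\ldots,e_{n-1}\}$ adapted to $g=dr^2+\theta^2 g_{\mathbb{S}^{n-1}}$, the Hessian of a radial function $\phi(r)$ is diagonal with eigenvalue $\phi''$ in the $\partial_r$ direction and $(\theta'/\theta)\phi'$ in each of the $n-1$ tangential directions; hence
$$
|\nabla^2 u|^2=(\phi'')^2+(n-1)\Big(\frac{\theta'}{\theta}\Big)^2(\phi')^2.
$$
Since $\nabla u=\phi'\partial_r$, one also has ${\rm Ric}(\nabla u,\nabla u)\geq{\rm Ric}_0\,(\phi')^2$ and, by radiality of $f$, $\nabla^2 f(\nabla u,\nabla u)=f''(r)(\phi')^2$. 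Substituting into \eqref{reilly} with $(L_\sigma u)^2=\lambda^2\phi^2$ and discarding the nonnegative term $(\phi'')^2$ yields
$$
\lambda^2\int_M\phi^2\sigma\;\geq\;\int_M\Big[(n-1)\Big(\frac{\theta'}{\theta}\Big)^2+{\rm Ric}_0+f''\Big](\phi')^2\sigma\;\geq\;K\int_M(\phi')^2\sigma,
$$
where $K$ denotes the infimum appearing in the statement.

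To conclude, I would use the identity $\int_M(\phi')^2\sigma=\int_M|\nabla u|^2\sigma=\lambda\int_M\phi^2\sigma$, obtained by multiplying $L_\sigma u=\lambda u$ by $u$ and integrating against $\sigma dv_g$ via the formula recalled in the introduction. This gives $\lambda^2\geq K\lambda$, whence $\lambda\geq K$ provided $\lambda>0$ (when $\lambda=0$ the eigenfunction is constant and the inequality is either trivial or vacuous). The two places where genuine care is needed are the Hessian computation on the warped product and the verification that the three boundary integrals vanish; both follow immediately from the radiality of $u$ and $f$ together with the Neumann condition, so I do not anticipate any serious obstacle.
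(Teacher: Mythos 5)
Your proposal is correct and follows essentially the same route as the paper: apply the weighted Reilly formula to the radial eigenfunction, observe that all boundary terms vanish by the Neumann condition and radiality, compute the diagonal Hessian in the adapted frame to bound $\abs{\nabla^2u}^2$ from below by $(n-1)(\theta'/\theta)^2\abs{\nabla u}^2$, and divide by $\lambda\int u^2\sigma$. Your explicit remark that the final step requires $\lambda>0$ is a small point of care that the paper's own proof leaves implicit.
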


\begin{proof} Let $u=u(r)$ be a radial eigenfunction associated to $\lambda$. We apply the Reilly formula \eqref{reilly} to  obtain:
$$
\lambda\int_M u^2\sigma=
\int_M\abs{\nabla^2 u}^2\sigma+{\rm Ric}(\nabla u,\nabla u)\sigma+\nabla^2f(\nabla u,\nabla u)\sigma.
$$
In fact, the boundary terms vanish because on $\bd M$ one has $\derive{u}{N}=0$ and, as $u$ is radial (hence constant on $\bd M$), one  also has  $\nabla^{\bd M}u=0$. 
We now wish to bound from below the terms involving the hessians. For that we need to use a suitable orthonormal frame. So, fix a point $p=(r,x)$ and consider a local frame $(\bar e_1,\dots,\bar e_{n-1})$ around $x$ which is orthonormal for the canonical metric of $\sphere {n-1}$. We can assume that this frame is geodesic at $x$. Taking $e_i=\frac{1}{\theta}\bar e_i$ it is clear that 
$(e_1,\dots,e_{n-1},\frac{\bd}{\bd r})$ is a local orthonormal frame on $(M,g)$.
If $\nabla$ denotes (as usual) the Levi-Civita connection of $(M,g)$ one sees easily that at $p$,
$$
\nabla_{e_i}e_j=-\delta_{ij}\frac{\theta'}{\theta}\derive{}{r},\quad \nabla_{e_i}\derive{}{r}=\frac{\theta'}{\theta}e_i,\quad \nabla_{\frac{\bd}{\bd r}}e_i= \nabla_{\frac{\bd}{\bd r}}\frac{\bd}{\bd r}=0.
$$
Since $\nabla u=u'\frac{\bd}{\bd r}$ we have $\scal{\nabla u}{e_i}=0$ for all $i$. Then
$$
\begin{aligned}
\nabla^2u(e_i,e_j)&=\scal{\nabla_{e_i}\nabla u}{e_j}=e_i\cdot\scal{\nabla u}{e_j}-\scal{\nabla u}{\nabla_{e_i}e_j}=\delta_{ij}\frac{\theta'}{\theta}u'.
\end{aligned}
$$
Similarly, one shows that $\nabla^2u(e_i,\frac{\bd}{\bd r})=0$ and $\nabla^2u(\dr,\dr)=u''$. It follows that the matrix of $\nabla^2u$ in the given basis  is diagonal, that is
$$
\nabla^2u={\rm diag}\Big(\frac{\theta'}{\theta}u',\dots,\frac{\theta'}{\theta}u',u''\Big)
$$
which in turn implies that
$$
\abs{\nabla^2u}^2\geq (n-1)\Big(\frac{\theta'}{\theta}\Big)^2u'^2=(n-1)\Big(\frac{\theta'}{\theta}\Big)^2\abs{\nabla u}^2.
$$
On the other hand
$$
\nabla^2f(\nabla u,\nabla u)=u'^2\nabla^2f(\dr,\dr)=f''\abs{\nabla u}^2.
$$
Substituting in the Reilly formula above, we obtain
$$
\lambda^2\int_Mu^2\sigma\geq\int_M\Big((n-1)\Big(\frac{\theta'}{\theta}\Big)^2+{\rm Ric}_0+f''\Big)\abs{\nabla u}^2\sigma
$$
so that, if 
$$
C=\inf_{(0,R)}\Big\{n\Big(\frac{\theta'}{\theta}\Big)^2+{\rm Ric}_0+f''\Big\}
$$
then
$$
\lambda^2\int_Mu^2\sigma\geq C\int_M\abs{\nabla u}^2\sigma=C \lambda\int_Mu^2\sigma
$$
which implies $\lambda\geq C$ as asserted.
\end{proof}

We are now ready to prove Theorem \ref{rev}. Let $\sigma_j=e^{-jr^2}$ so that $f''(r)=2j$. If $\lambda_2(L_{\sigma_j})$ is associated to a radial eigenfunction,  Lemma \ref{radial} gives
$$
\lambda_2(L_{\sigma_j})\geq 2j+C_1,
$$
with $C_1$ independent of $j$. It is also clear that if $M$ has non-negative Ricci curvature then $\lambda_2(L_{\sigma_j})\geq 2j$. 

\smallskip

Taking into account Lemma \ref{twocase}, Theorem \ref{rev} will now follow from

\begin{lemme} \label{nonrad} Let $\lambda_2$ be the first positive eigenvalue of the problem
$$
\twosystem
{\phi''+\Big((n-1)\dfrac{\theta'}{\theta}-2jr\Big)\phi'+\Big(\lambda-\dfrac{n-1}{\theta^2}\Big)\phi=0}
{\phi(0)=\phi'(R)=0}
$$
Then 
$$
\lambda_2\geq 2j+C_2
$$
where $C_2=(n-1)\inf\Big\{\dfrac{r-\theta'\theta}{r\theta^2}: r\in (0,R)\Big\}$. Moreover, if $M$ has non-negative Ricci curvature, then $C_2\geq 0$.
\end{lemme}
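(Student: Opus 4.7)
The plan is to read off the estimate from the Sturm--Liouville/Rayleigh formulation of the problem, combined with a radial Hardy-type inequality.

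Let $\rho(r)=\theta(r)^{n-1}e^{-jr^2}$ be the natural density, which puts the ODE in Sturm--Liouville form and satisfies $\rho'=\bigl((n-1)\theta'/\theta-2jr\bigr)\rho$. Multiplying the ODE by $\phi\rho$, integrating on $[0,R]$, and eliminating the endpoint contributions via $\phi(0)=0$ and $\phi'(R)=0$ gives the Rayleigh identity
$$
\lambda_2\int_0^R\phi^2\rho\,dr=\int_0^R(\phi')^2\rho\,dr+(n-1)\int_0^R\frac{\phi^2}{\theta^2}\,\rho\,dr.
$$
The definition of $C_2$ provides the pointwise inequality $(n-1)/\theta^2\geq C_2+(n-1)\theta'/(r\theta)$, while the expression for $\rho'$ rearranges into the key algebraic identity
$$
(n-1)\,\frac{\theta'}{r\theta}\,\rho=\frac{\rho'}{r}+2j\rho.
$$
Substituting both back into the Rayleigh identity makes the $2j$ contributions cancel exactly, and reduces the desired bound $\lambda_2\geq 2j+C_2$ to the single Hardy-type inequality
$$
\int_0^R(\phi')^2\rho\,dr+\int_0^R\frac{\phi^2\rho'}{r}\,dr\geq 0.
$$

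To prove this, I would integrate the second term by parts with $u=\phi^2/r$, $dv=\rho'\,dr$. The boundary term at $r=R$ equals $\phi(R)^2\rho(R)/R\geq 0$; the one at $r=0$ vanishes because the indicial analysis of the ODE at the regular singular point $r=0$ (whose exponents are $1$ and $-(n-1)$) forces $\phi(r)=O(r)$, while $\rho(r)=O(r^{n-1})$. The surviving interior integrals combine with $\int_0^R(\phi')^2\rho\,dr$ into the manifest square $\int_0^R(\phi'-\phi/r)^2\rho\,dr\geq 0$, which closes the main estimate.

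For the final assertion, non-negative Ricci curvature forces $\mathrm{Ric}(\partial_r,\partial_r)=-(n-1)\theta''/\theta\geq 0$, so $\theta''\leq 0$. Hence $\theta$ is concave on $[0,R]$ with $\theta(0)=0$ and $\theta'(0)=1$, which yields $\theta(r)\leq r$ and $\theta'(r)\leq 1$ throughout. A brief case split then gives $r-\theta\theta'\geq 0$: either $\theta'(r)>0$, whence $\theta\theta'\leq\theta\leq r$, or $\theta'(r)\leq 0$, whence $\theta\theta'\leq 0\leq r$. Thus $C_2\geq 0$. The only delicate step in the whole argument is the integration by parts at the singular endpoint $r=0$; once the asymptotic $\phi\sim cr$ is secured by the indicial equation, everything reduces to routine manipulation of the weight $\rho$.
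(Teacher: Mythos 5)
Your proof is correct and is essentially the paper's argument in a different guise: the paper substitutes $\phi=ry$ and reads the bound off the resulting Sturm--Liouville form $(\beta y')'+\bigl(\lambda-2j+(n-1)\tfrac{\theta'\theta-r}{r\theta^2}\bigr)\beta y=0$ with $\beta=r^2\theta^{n-1}e^{-jr^2}$, while your completed square $\int_0^R(\phi'-\phi/r)^2\rho\,dr$ equals $\int_0^R\beta (y')^2\,dr$ and your boundary term at $R$ matches theirs via $y(R)=-Ry'(R)$. Both the algebraic identity producing the $2j$ and the treatment of the singular endpoint (where $\phi=O(r)$, which you justify a bit more explicitly than the paper) coincide, so there is nothing to fix.
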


We observe that $C_2$ is always finite because since $r\to \dfrac{r-\theta'\theta}{r\theta^2}$ approaches $-\frac23\theta'''(0)$ as $r\to 0$, and then is bounded on $[0,R]$.

\begin{proof} Set $\phi(r)=ry(r)$ so that
$
\phi'=y+ry'$ and $ \phi''=2y'+ry''.$
Substituting in the equation we obtain
$$
y''+\Big(\dfrac 2r+(n-1)\dfrac{\theta'}{\theta}-2jr\Big)y'+\Big(\lambda-2j+(n-1)\dfrac{\theta'\theta-r}{r\theta^2}\Big)y=0.
$$
If $\beta=r^2\theta^{n-1}e^{-jr^2}$, then
$
\dfrac 2r+(n-1)\dfrac{\theta'}{\theta}-2jr=\dfrac{\beta'}{\beta}
$
and the equation takes the form
$$
(\beta y')'+\Big(\lambda-2j+(n-1)\dfrac{\theta'\theta-r}{r\theta^2}\Big)\beta y=0.
$$
Multiplying by $y$ and integrating on $(0,R)$ we end-up with
$$
(\lambda-2j)\int_0^R\beta y^2\,dr=-\int_0^R(\beta y')'y\,dr+\int_0^R (n-1)\dfrac{r-\theta'\theta}{r\theta^2}\beta y^2\, dr.
$$
We observe that $\beta(0)=0$ and, as $\phi'(R)=0$, we have $y(R)=-Ry'(R)$. Then, integrating by parts:
$$
\begin{aligned}
-\int_0^R(\beta y')'y\,dr&=-\beta(R)y'(R)y(R)+\int_0^R\beta y'^2\,dr\\
&=\beta(R)y'(R)^2R+\int_0^R\beta y'^2\,dr\geq 0.
\end{aligned}
$$
From the definition of the constant $C_2$ we obtain
$$
(\lambda-2j)\int_0^R\beta y^2\,dr\geq C_2 \int_0^R\beta y^2\,dr
$$
which gives  the assertion.

Finally, it is well-known that
${\rm Ric}(\dr,\dr)=-(n-1)\dfrac{\theta''}{\theta}$. If  $M$ has non-negative Ricci curvature then $\theta''\leq 0$;   in turn we have $\theta'\leq 1$ and $\theta\leq r$, which implies that $r-\theta'\theta\geq 0$ and, then, $C_2\geq 0$. 
\end{proof}


\subsection{Closed revolution manifolds} A {\it closed revolution manifold} of dimension $n$ is a compact manifold $(M,g)$ without boundary, having two distinguished points $N,S$ such that $(M\setminus\{N,S\},g)$ is isometric to  $(0,R)\times\sphere {n-1}$ endowed with the metric
$
dr^2+\theta^2(r)g_{\sphere n}
$, where 
$\theta:[0,R]\to \R$ is smooth and 
$$
\theta(0)=\theta(R)=0, \quad \theta'(0)=-\theta'(R)=1, \quad \theta''(0)=\theta''(R)=0.
$$
Under these assumptions, the metric $g$ is $C^2$.  To have a $C^{\infty}$ metric it is enough to assume that $\theta^{(2i)}(0)=\theta^{(2i)}(R)=0$ for all $i=0,1,2,\dots$.

\smallskip

Our aim is to  construct a sequence of radial densities $\{\sigma_j\}$ on $M$ such that $\lambda_2(L_{\sigma_j})$ grows linearly with $j$. These densities will be Gaussian functions centered at the pole  $N$ of $M$, suitably smoothened near the pole  $S$ so that the resulting function  is globally $C^1$ (this is enough for our purpose). 
Thus, let us define
\begin{equation}\label{cone}
\sigma_j(r)=e^{-f_j(r)}
\end{equation}
where $f_j(r)=jh_j^2(r)$ and 
\begin{equation}\label{hk}
h_j(r)=\twosystem
{r&\quad\text{if}\quad 0\leq r\leq r_j\doteq R-\frac{1}{\alpha j}}
{r-\frac{\alpha j}{2}(r-r_j)^2&\quad\text{if}\quad r_j\leq r\leq R}
\end{equation}
Here $\alpha\geq 1$ is a fixed number, which is large enough so that
$$
\dfrac{(n-1)\alpha^2}{16}-2\alpha R\geq 2.
$$
The function $h_j$ is of class $C^1$ on $[0,R]$. Since  $\sigma'_j(0)=\sigma'_j(R)=0$, we see that $\sigma_j$ is a $C^1$ function on $M$.

\begin{thm} \label{revclosed}
Let $(M,g)$ be a closed revolution manifold endowed with the density $\sigma_j$ defined in \eqref{cone} and \eqref{hk}. Then there exist an integer $j_0$ and a constant $C$, not depending on $j$, such that, for all $j\geq j_0$, one has:
$$
\lambda_2(L_{\sigma_j})\geq 2j+C.
$$
\end{thm}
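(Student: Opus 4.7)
The plan is to follow closely the strategy of Theorem~\ref{rev}, adapting each step to the closed setting. First, separation of variables as in Lemma~\ref{twocase} decomposes any eigenfunction as $u(r,x) = \phi(r)\xi(x)$ with $\xi$ a spherical harmonic on $\sphere{n-1}$; Courant's nodal-domain argument restricts $\xi$ to correspond to either $\mu = 0$ (the radial case) or $\mu = n-1$ (the non-radial case), and regularity of $u$ at the two poles yields the boundary conditions $\phi'(0) = \phi'(R) = 0$ in the radial case and $\phi(0) = \phi(R) = 0$ in the non-radial case. The essential new phenomenon, compared to Theorem~\ref{rev}, is that the geometry near the second pole $S$ at $r=R$ provides large positive ``curvature-like'' terms---$(n-1)(\theta'/\theta)^2$ in the radial case and $(n-1)(r-\theta'\theta)/(r\theta^2)$ in the non-radial case---which, on the small bad region $[r_j, R]$ where $f_j\ne jr^2$, blow up at rate $\alpha^2 j^2$; these dominate the bad contributions from $f_j''$ and $f_j'/r$, provided $\alpha$ is chosen large enough. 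This is the meaning of the calibration $(n-1)\alpha^2/16 - 2\alpha R \ge 2$.

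In the non-radial case, I substitute $\phi = ry$ as in Lemma~\ref{nonrad}. With $f = f_j$, the ODE becomes, after multiplying by $y$ and integrating on $[0, R]$,
$$
\lambda \int_0^R \beta_j y^2\,dr \,=\, \int_0^R \beta_j (y')^2\,dr + \int_0^R V_j(r)\,\beta_j y^2\,dr,\qquad \beta_j = r^2\theta^{n-1}e^{-f_j},
$$
where $V_j(r) = f_j'(r)/r + (n-1)(r-\theta'\theta)/(r\theta^2)$; the boundary terms vanish thanks to $\beta_j(0) = 0$ and $y(R) = \phi(R)/R = 0$. On $[0, r_j]$ one has $f_j'/r = 2j$ identically, so $V_j \ge 2j + C_2$ with $C_2$ the constant of Lemma~\ref{nonrad}. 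On $[r_j, R]$, $f_j'/r$ may be as small as $0$, but since $\theta(r)\sim R-r \le 1/(\alpha j)$ there, the second term is of order $(n-1)/\theta^2 \ge (n-1)\alpha^2 j^2$, which dominates $2j$ comfortably for $j$ large. Hence $V_j \ge 2j + C$ globally, yielding the desired lower bound on $\lambda$.

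In the radial case, I would apply the weighted Reilly formula~\eqref{reilly}, approximating the only-$C^1$ density $\sigma_j$ by smooth densities and passing to the limit. Since $M$ is boundaryless, the boundary terms vanish, and the argument of Lemma~\ref{radial} gives the pointwise lower bound $\lambda \ge \inf_{(0,R)}\bigl\{(n-1)(\theta'/\theta)^2 + \mathrm{Ric}_0 + f_j''\bigr\}$. On $[0, r_j]$ this yields $\ge 2j + C_1$ since $f_j'' = 2j$. On $[r_j, R]$, a direct computation gives $f_j'' \ge -2\alpha j^2 R$, but $(\theta'/\theta)^2 \ge \alpha^2 j^2(1+o(1))$ near $R$ (because $\theta(r) \sim R-r \le 1/(\alpha j)$), hence
$$
(n-1)(\theta'/\theta)^2 + f_j'' \ge j^2\bigl((n-1)\alpha^2 - 2\alpha R\bigr) + o(j^2),
$$
which exceeds $2j$ for $j$ large thanks to the hypothesis on $\alpha$.

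The main obstacle is quantitative control of these estimates on $[r_j, R]$: the asymptotics $\theta(r) = (R-r) + O((R-r)^2)$ near $S$ must be tracked carefully so that the approximate bound $(\theta'/\theta)^2 \approx 1/(R-r)^2$ (resp.\ $1/\theta^2$) becomes a strict pointwise inequality valid throughout $[r_j, R]$ with the right constant; the prefactor $1/16$ rather than $1/2$ in the hypothesis $(n-1)\alpha^2/16 - 2\alpha R \ge 2$ is presumably there to absorb the error terms in this Taylor expansion. A secondary obstacle in the radial case is the approximation argument required to apply the Reilly formula, whose usual proof needs $\sigma_j \in C^2$, to our only-$C^1$ density.
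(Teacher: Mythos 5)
Your proposal is correct. The radial case is handled exactly as in the paper (weighted Reilly/Bochner formula plus the calibration $(n-1)\alpha^2/16-2\alpha R\ge 2$; the paper in fact glosses over the $C^1$-regularity issue you flag, simply asserting that Lemma \ref{radial} ``holds without change''). Where you genuinely depart from the paper is the non-radial case. The paper does not run the Sturm--Liouville identity globally on $[0,R]$: it first picks an interior zero $\bar R$ of $\phi'$, so that $u$ is a Neumann eigenfunction on each of the two caps $\Omega_1=\{r\le\bar R\}$ and $\Omega_2=\{r\ge\bar R\}$, and then splits into two sub-cases. If $\bar R<r_j$ it applies Lemma \ref{nonrad} verbatim on $[0,\bar R]$, where $f_j'=2jr$; if $\bar R\ge r_j$ it abandons the ODE entirely and shows that $(\Omega_2,g)$ is quasi-isometric (with ratio bounded by $4$) to a Euclidean ball of radius at most $1/(\alpha j)$ carrying a density of uniformly bounded oscillation, whence $\lambda_2(L_{\sigma_j})\ge C_3 j^2\ge 2j$ for large $j$. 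Your single integrated identity with the potential $V_j=f_j'/r+(n-1)(r-\theta'\theta)/(r\theta^2)$, together with the observation that on $[r_j,R]$ the loss of the term $f_j'/r$ is more than compensated by $(n-1)/\theta^2\ge (n-1)\alpha^2j^2/4$ (using $\theta\le 2(R-r)\le 2/(\alpha j)$ there), is cleaner and avoids the case distinction. The only extra care it requires is at the endpoint $r=R$: in Lemma \ref{nonrad} one has $\theta(R)>0$ and the boundary term is controlled via $y(R)=-Ry'(R)$, whereas here $\beta_j(R)=0$ because $\theta(R)=0$ and moreover $y(R)=\phi(R)/R=0$, so the boundary term vanishes and all the integrals converge since they are dominated by the Dirichlet energy of $u$. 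What the paper's route buys is a stronger bound of order $j^2$ in the sub-case $\bar R\ge r_j$; what yours buys is uniformity and brevity. Both yield $\lambda_2(L_{\sigma_j})\ge 2j+C$ for all $j$ large.
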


We give the proof of Theorem \ref{revclosed} in the next subsection. We just want to mention here the analogue of Theorem \ref{convex}.

\begin{thm}\label{revcomp} Let $(M,g)$ be a revolution manifold with boundary (resp. a closed revolution manifold) endowed with the density $\sigma_j$ as in Theorem \ref{rev} (resp. Theorem \ref{revclosed}). Then, for $j$ sufficiently large,
$${A\dfrac{\norm{\sigma_j}_{\frac{n}{n-2}}}{\norm{\sigma_j}_1}}\le\lambda_2(L_{\sigma_j})\le {B\dfrac{\norm{\sigma_j}_{\frac{n}{n-2}}}{\norm{\sigma_j}_1}}$$
where $A$ and $B$ are positive constants depending on $M$, but not on $j$. Moreover,  $\lambda_2(L_{\sigma_j})$ tends to infinity with a linear growth as $j\to\infty$.
\end{thm}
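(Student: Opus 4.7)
\medskip
\noindent\textit{Proof proposal for Theorem \ref{revcomp}.} The upper bound is essentially immediate: applying Theorem \ref{mainthm1} with $k=2$ gives
$$
\lambda_2(L_{\sigma_j})\le B\,\dfrac{\norm{\sigma_j}_{\frac{n}{n-2}}}{\norm{\sigma_j}_1}
$$
with $B=C([g])\cdot 2^{2/n}$ (when $n=2$ replace $L^{n/(n-2)}$ by $L^\infty$ and use Part (II)), since the conformal class $[g]$ of $(M,g)$ is fixed and independent of $j$. For the lower bound we will combine the linear lower bounds $\lambda_2(L_{\sigma_j})\ge 2j+C$ provided by Theorem \ref{rev} and Theorem \ref{revclosed} with a linear upper bound on the density ratio, of the form $\norm{\sigma_j}_{\frac{n}{n-2}}/\norm{\sigma_j}_1\le C'\,j$ valid for all large $j$.

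\medskip
The core of the argument consists in estimating $\norm{\sigma_j}_p$ for $p\in\{1,\frac{n}{n-2}\}$ in the same spirit as Lemma \ref{sigmabounds}, but on the manifold. In geodesic polar coordinates centered at $N$, the Riemannian measure is $\theta(r)^{n-1}\,dr\,d\omega$, so
$$
\norm{\sigma_j}_p^p=\abs{\sphere{n-1}}\int_0^R e^{-pj\,h_j(r)^2}\,\theta(r)^{n-1}\,dr,
$$
where $h_j(r)=r$ in the boundary case and $h_j$ is defined by \eqref{hk} in the closed case. Since $\theta(0)=0$ and $\theta'(0)=1$, by a Taylor expansion there exist $\delta\in(0,R)$ and constants $0<c\leq C$ (depending only on $\theta$) such that
$$
cr\le \theta(r)\le Cr \quad\text{for all } r\in[0,\delta],
$$
and moreover $\theta(r)\le\norm{\theta'}_\infty r$ on all of $[0,R]$. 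First I would use the upper bound $\theta^{n-1}\leq\norm{\theta'}_\infty^{n-1}r^{n-1}$ and the change of variable $u=\sqrt{jp}\,r$ to obtain the Euclidean-type bound $\norm{\sigma_j}_p\le C_p/j^{n/(2p)}$, as in part a) of Lemma \ref{sigmabounds}. Then I would use the lower bound $\theta(r)\ge cr$ on $[0,\delta]$ and the same change of variables to obtain, for $j$ large enough so that $\sqrt{j}\delta$ is large,
$$
\norm{\sigma_j}_1\ge c^{n-1}\abs{\sphere{n-1}}\int_0^\delta e^{-jr^2}r^{n-1}\,dr\ge \dfrac{c'_n}{j^{n/2}}.
$$
Combining the two yields $\norm{\sigma_j}_{n/(n-2)}/\norm{\sigma_j}_1\le C'\,j$ for all sufficiently large $j$.

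\medskip
In the closed case the extra subtlety is the modification of $h_j$ on $[r_j,R]$. Since $h_j(r)\le r$ everywhere (so $\sigma_j\ge e^{-jr^2}$, which is only helpful for the lower bound on $\norm{\sigma_j}_1$) and $h_j(r)\ge h_j(r_j)=r_j=R-1/(\alpha j)$ on $[r_j,R]$, the portion of $\norm{\sigma_j}_p^p$ coming from $[r_j,R]$ is bounded by
$$
(R-r_j)\cdot e^{-pj\,r_j^2}\cdot\max_{[0,R]}\theta^{n-1}\cdot\abs{\sphere{n-1}}=O\bigl(e^{-pj R^2/2}\bigr),
$$
which is exponentially small as $j\to\infty$ and hence negligible compared to the polynomial factor $j^{-n/(2p)}$ coming from the Gaussian part on $[0,r_j]$. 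For the lower bound on $\norm{\sigma_j}_1$, since $r_j\to R$, for $j$ large we have $\delta<r_j$ and the previous lower bound over $[0,\delta]$ is still valid. Therefore the estimate $\norm{\sigma_j}_{n/(n-2)}/\norm{\sigma_j}_1\le C'\,j$ holds in both cases.

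\medskip
With the density ratio under control, the lower bound follows by choosing $A<2/C'$: for $j$ large enough so that $2j+C\ge A C'\,j$ (absorbing the additive constant $C$ from Theorem \ref{rev}/\ref{revclosed} into the linear term), we obtain
$$
\lambda_2(L_{\sigma_j})\ge 2j+C\ge A C'\,j\ge A\,\dfrac{\norm{\sigma_j}_{\frac{n}{n-2}}}{\norm{\sigma_j}_1}.
$$
The linear growth of $\lambda_2(L_{\sigma_j})$ is a direct consequence. The main obstacle I anticipate is the verification that the closed-manifold cut-off in \eqref{hk} does not break the $L^p$ estimates, but this reduces to the exponentially small tail estimate explained above, using the fact that the cut-off region $[r_j,R]$ shrinks at rate $1/j$ while $\sigma_j$ is of order $e^{-jR^2}$ there.
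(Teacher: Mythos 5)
Your proof is correct and takes essentially the same route as the paper's (which only sketches the argument): reduce to one-dimensional Gaussian-type integrals via the polar-coordinate formula $\norm{\sigma_j}_p^p=\abs{\sphere{n-1}}\int_0^R\theta^{n-1}\sigma_j^p\,dr$, exploit $\theta(r)\sim r$ near the pole to get $\norm{\sigma_j}_{n/(n-2)}/\norm{\sigma_j}_1\le C'j$, dispose of the cut-off region $[r_j,R]$ in the closed case by an exponentially small tail estimate, and then combine the resulting linear bound on the density ratio with Theorems~\ref{mainthm1}, \ref{rev}, \ref{revclosed}. Your write-up is somewhat more explicit than the paper's sketch (e.g.\ the two-sided bound $cr\le\theta(r)\le Cr$ on $[0,\delta]$ and the quantitative tail estimate), but the underlying strategy and the constants that enter are the same.
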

The proof of Theorem \ref{revcomp} is quite similar to that of the corresponding statement in Theorem \ref{convex}, and we only sketch it. 
Assume that $M$ has a boundary and $n\geq 3$. We start from:
$$
\norm{\sigma_j}_p^p=\int_{M}\sigma_j^p(r) dv_g=\abs{\sphere{n-1}}\int_0^R\theta^{n-1}(r)\sigma_j^p(r)\,dr.
$$
Now  $\abs{\theta(r)-r}\leq cr^3$ and $\theta^{n-1}(r)\sim r^{n-1}$ as $r\to 0$; since, for $p=\frac{n}{n-2}$ and $j$ large:
$$
\dfrac{\Big(\int_0^Rr^{n-1}e^{-jpr^2}\,dr\Big)^\frac{1}{p}}{\int_0^Rr^{n-1}e^{-jr^2}\,dr}\leq C'j,
$$
we get in turn
$$
\dfrac{\norm{\sigma_j}_{\frac{n}{n-2}}}{\norm{\sigma_j}_1}\leq C''j.
$$
The assertion follows from the last inequality and the estimate of $ \lambda_2(L_{\sigma_j})$ we have obtained in Theorem \ref{rev}.   If $M$ is closed the only change is in the definition of $\sigma_j$. However, this change  occurs far from the pole and thus contributes with exponentially decreasing terms, which, modulo a change in the constants,  do not show up in the final estimate.
We omit further details.

\subsection{Proof of Theorem \ref{revclosed}}
We start the proof by observing, as in the previous section, that there is an eigenfunction associated to $\lambda_2(L_{\sigma_j})$  of type $u(r,x)=\phi(r)\xi(x)$ with  $\phi$ satisying a suitable Sturm-Liouville problem on the interval $[0,R]$ and $\xi$ being an eigenfunction of the Laplacian on $\sphere {n-1}$.  
For a closed manifold, Lemma \ref{twocase} takes the following form. 

\begin{lemme}\label{twocaseprime} Let $(M,g)$ be a closed revolution manifold endowed with a radial density  $\sigma(r)=e^{-f(r)}$. Then, either $L_{\sigma}$ admits a radial eigenfunction associated to $\lambda_2(L_{\sigma})$, or $\lambda_2(L_{\sigma})$ is the first positive eigenvalue of the following Sturm-Liouville problem on $[0,R]$:
$$
\twosystem
{\phi''+\Big((n-1)\dfrac{\theta'}{\theta}-f'\Big)\phi'+\Big(\lambda-\dfrac{n-1}{\theta^2}\Big)\phi=0}
{\phi(0)=\phi(R)=0}
$$
\end{lemme}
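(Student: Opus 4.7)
My plan is to proceed as in the proof of Lemma \ref{twocase}, using separation of variables and a dichotomy on the angular part of the eigenfunction, while adapting the boundary analysis to the fact that the metric now degenerates at \emph{both} endpoints $r=0$ and $r=R$ (corresponding to the poles $N$ and $S$) rather than having a smooth boundary at $r=R$. Since $\sigma$ is radial and $(M,g)$ is rotationally symmetric, $L_\sigma$ commutes with the $SO(n)$-action on $\sphere{n-1}$ and the standard argument yields an orthonormal basis of $L^2(\sigma\, dv_g)$ made of functions $u(r,x) = \phi(r)\xi(x)$ where $\xi$ is an eigenfunction of $\Delta_{\sphere{n-1}}$ associated to an eigenvalue $\mu_k \in \{0, n-1, 2n,\dots\}$. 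By formula \eqref{dlap}, setting $L_\sigma u = \lambda u$ forces the radial factor $\phi$ to satisfy
$$
\phi'' + \Big((n-1)\dfrac{\theta'}{\theta} - f'\Big)\phi' + \Big(\lambda - \dfrac{\mu_k}{\theta^2}\Big)\phi = 0.
$$

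\medskip

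The boundary conditions at both endpoints are forced by continuity of $u$ at the poles. Since the fibre $\{0\}\times\sphere{n-1}$ collapses to the single point $N$, the value $\phi(0)\xi(x)$ must be independent of $x$; this yields $\phi(0)=0$ when $\xi$ is non-constant ($\mu_k>0$) and the regularity condition $\phi'(0)=0$ when $\xi$ is constant ($\mu_k=0$). The identical discussion at $r=R$ produces $\phi(R)=0$ or $\phi'(R)=0$ respectively. Hence the restriction of $L_\sigma$ to the $\mu_k$-isotypical subspace is a Sturm-Liouville problem on $[0,R]$, with Neumann conditions at both ends when $\mu_k=0$ and Dirichlet conditions at both ends when $\mu_k>0$.

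\medskip

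To identify $\lambda_2(L_\sigma)$, I note that the spectrum of $L_\sigma$ is the union over $k$ of the spectra of these Sturm-Liouville problems. The $\mu_k=0$ problem has first eigenvalue $0$ (constant $\phi$), and its second eigenvalue is the first positive \emph{radial} eigenvalue of $L_\sigma$. For $\mu_k>0$, the Rayleigh quotient carries an additional nonnegative potential $\mu_k/\theta^2$, so by the min-max principle the first eigenvalue of the $\mu_k$-problem is nondecreasing in $\mu_k$; its smallest value is attained at $\mu_k=\mu_2=n-1$ and equals precisely the first positive eigenvalue of the Sturm-Liouville problem displayed in the statement. Consequently $\lambda_2(L_\sigma)$ is the minimum of these two candidates, which gives exactly the stated dichotomy.

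\medskip

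The step most requiring care is the justification of the boundary conditions at the degenerate endpoints $r=0$ and $r=R$, where the metric coefficient $\theta$ vanishes to first order. This is handled either by a direct regularity analysis in geodesic normal coordinates centered at each pole, or by checking that the stated conditions make the radial operator essentially self-adjoint on the weighted space $L^2\bigl(\theta^{n-1}\sigma\,dr\bigr)$ on $[0,R]$; in both approaches the conclusion is that Dirichlet conditions arise in the non-radial sectors ($\mu_k>0$) whereas Neumann conditions arise in the radial sector ($\mu_k=0$). A minor secondary point is the monotonicity in $\mu_k$ of the first eigenvalue of the $\mu_k$-problem, which is immediate from the min-max characterization once both problems are written as Rayleigh quotients on a common test space.
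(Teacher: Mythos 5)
Your proof is correct, and structurally it matches what the paper does: separate variables, reduce the eigenvalue problem to a family of Sturm--Liouville problems indexed by the eigenvalues $\mu_k$ of $\sphere{n-1}$, obtain the Dirichlet/Neumann conditions at the two poles from continuity of $u$, and identify $\lambda_2(L_\sigma)$ as the minimum over sectors. (The paper does not actually reprove this for the closed case; it simply says that Lemma~\ref{twocase} ``takes the following form,'' and the substance of that lemma's proof is what you reproduce.) The one place where you take a different route is in discarding the sectors $\mu_k\geq\mu_3$: the paper invokes Courant's nodal domain theorem (``otherwise $u$ would have too many nodal domains''), whereas you observe that the first Dirichlet eigenvalue of the $\mu_k$-problem is nondecreasing in $\mu_k$ because the potential term $\mu_k/\theta^2$ in the Rayleigh quotient is. Your variational argument is a bit more robust --- it sidesteps a nodal-domain count on a warped product that is slightly fiddly to make airtight --- and it generalizes more readily to higher eigenvalues, while the nodal-domain remark is shorter. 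Your treatment of the boundary conditions at the degenerate endpoints, and of why the $\mu_k=0$ sector yields Neumann conditions and the $\mu_k>0$ sectors Dirichlet ones, fills in detail the paper compresses to a one-line remark; it is correct.
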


Note the boundary conditions on $\phi$, which are imposed so that the corresponding eigenfunction $u$ is continuous at both poles $N$ and $S$.

\smallskip

By definition, $R-r_j=\frac{1}{\alpha j}$ and, as $j\to\infty$ we have $r_j\to R$. Since $\theta'$ is continuous and $\theta'(R)=-1$,  there exists an integer $j_1$ such that, for all $j\geq j_1$,
$$
-2\leq \theta'(r)\leq -\frac 12
$$
for all $r\in [r_j,R]$. 
Consequently, on that  interval we also have
\begin{equation}\label{theta}
\frac 12(R-r)\leq \theta( r )\leq 2(R-r)\quad\text{and}\quad 
\vert {\dfrac{\theta'( r )}{\theta ( r ) }}\vert\geq \frac{1}{4(R-r)}.
\end{equation}

According to Lemma \ref{twocaseprime} there are two cases to discuss.

\smallskip

\noindent{\it First case}:  there is a radial eigenfunction associated to $\lambda=\lambda_2(L_{\sigma_j})$. \\
Then, we apply Lemma \ref{radial} (which holds without change) and for the lower bound  it suffices to verify the inequality:
\begin{equation}\label{ine}
(n-1)\Big(\dfrac{\theta'(r)}{\theta(r)}\Big)^2+f_j''(r)\geq 2j
\end{equation}
for all $r\in (0,R)$. 
Indeed, one has $f''_j=2j(h_j'^2+h_jh_j'')$. Thus, on the interval $(0,r_j)$ one gets $f''_j= 2j$ and \eqref{ine} is immediate. On the interval $(r_j,R)$ one has $h_j\leq r\leq R$ and $h''_j=-\alpha j$. Then:
$$
f''_j\geq 2jh_jh''_j\geq -2\alpha j^2R.
$$
On the other hand, as $R-r\leq R-r_j=\frac{1}{\alpha j}$, we see from \eqref{theta}:
$$
\Big(\dfrac{\theta'(r)}{\theta(r)}\Big)^2\geq \dfrac{1}{16(R-r)^2}\geq \dfrac{\alpha^2j^2}{16}.
$$
Recalling the definition of $\alpha$ we see that, for $j\geq j_1$:
$$
(n-1)\Big(\dfrac{\theta'(r)}{\theta(r)}\Big)^2+f_j''(r)\geq \Big((n-1)\dfrac{\alpha^2}{16}-2\alpha R\Big)j^2\geq 2j^2\geq 2j,
$$
and the assertion follows. 

\smallskip

\noindent{\it Second case}: $\lambda_2(L_{\sigma_j})$ is the first positive eigenvalue of the problem:
$$
\twosystem
{\phi''+\Big((n-1)\dfrac{\theta'}{\theta}-f'_j\Big)\phi'+\Big(\lambda-\dfrac{n-1}{\theta^2}\Big)\phi=0}
{\phi(0)=\phi(R)=0}
$$
First observe that there exists $\bar R\in (0,R)$ such that $\phi'(\bar R)=0$ (note that $\bar R$ depends on $j$).
It follows that $u$ is a  Neumann eigenfunction associated to $\lambda_2(L_{\sigma_j})$ for both of the following domains:
$$
\Omega_1=\{(r,x)\in M: r\leq \bar R\}, \quad \Omega_2=\{(r,x)\in M: r\geq \bar R\}.
$$
We first assume that $\bar R<r_j$ and focus our attention on $\Omega_1$. As $f'_j=2jr$ on $[0,\bar R]$, we see that $\lambda_2(L_{\sigma_j})$ is bounded below by the first positive eigenvalue of the problem
$$
\twosystem
{\phi''+\Big((n-1)\dfrac{\theta'}{\theta}-2jr\Big)\phi'+\Big(\lambda-\dfrac{n-1}{\theta^2}\Big)\phi=0}
{\phi(0)=\phi'(\bar R)=0.}
$$
By Lemma \ref{nonrad} we have
$$
\lambda_2(L_{\sigma_j})\geq 2j+\tilde C_2,
$$
where $\tilde C_2=(n-1)\inf_{0>r>\bar R}\Big\{\dfrac{r-\theta'\theta}{r\theta^2} \Big\}\geq (n-1)\inf_{0>r>R}\Big\{\dfrac{r-\theta'\theta}{r\theta^2} \Big\}$, a constant which does not depend on $j$, and we are done.

\smallskip

Finally, it remains to examine the case where $\bar R\geq r_j$.
In this case, we view $u$ as a Neumann eigenfunction on the domain $\Omega_2$. Let us briefly sketch the argument.  As $j\to\infty$, $\Omega_2$ is quasi-isometric to a Euclidean ball of small radius (of the order of $1/j$) and has a density with uniformly controlled variation. Therefore, its first positive eigenvalue must be large (of the order of $j^2$). Let us  clarify the details. 
 We know that
$$
\lambda_2(L_{\sigma_j})=\dfrac{\int_{\Omega_2}\abs{\nabla u}^2\sigma_j dv_g}{\int_{\Omega_2} u^2\sigma_j dv_g}.
$$
Since  $h_j( r )$ is  increasing in  $r\in (r_j,R)$, one has  $r_j=h_j(r_j)\leq h_j(r)\leq h_j(R)\leq R$. Thus, for all  $r\in (\bar R,R)$, one has (recall that  $r_j=R-\frac{1}{\alpha j}$) 
$$
e^{-jR^2}\leq \sigma_j(r)\leq e^{-j(R-\frac{1}{\alpha j})^2}.
$$
Consequently,
\begin{equation}\label{quasiiso}
\lambda_2(L_{\sigma_j})\geq \dfrac{\int_{\Omega_2}\abs{\nabla u}^2dv_g}{\int_{\Omega_2} u^2 dv_g}\ e^{-\frac{2R}{\alpha}}.
\end{equation}
As $u(r,x)=\phi(r)\xi(x)$ and $\int_{\sphere {n-1}}\xi=0$, we see that $\int_{\Omega_2}udv_g=0$. Hence, by the min-max principle:
\begin{equation}\label{minmax}
\dfrac{\int_{\Omega_2}\abs{\nabla u}^2dv_g}{\int_{\Omega_2} u^2 dv_g}\geq \mu_2(\Omega_2,g)
\end{equation}
where $\mu_2(\Omega_2,g)$ denotes the first positive Neumann eigenvalue of the Laplacian on the domain $(\Omega_2,g)$.  On the other hand, the first inequality in \eqref{theta} shows that 
 the metric $g$ is quasi-isometric, on $\Omega_2$, to the standard Euclidean metric $g_{euc}=dr^2+r^2g_{\sphere {n-1}}$, with quasi-isometry ratio bounded by $4$. Thus,   $(\Omega_2,g)$ is quasi-isometric to the Euclidean ball $(\Omega_2,g_{\rm euc})$ of radius $R-\bar R$. Therefore:
\begin{equation}\label{third}
\begin{aligned}
\mu_2(\Omega_2,g)&\geq 4^{-(n+2)}\mu_2(\Omega_2,g_{euc})\geq \dfrac{4^{-(n+2)}\mu(n+1)}{(R-\bar R)^2}
\end{aligned}
\end{equation}
where $\mu(n+1)$ is the first positive Neumann eigenvalue of the unit Euclidean ball. 
Since $R-\bar R\leq R-r_j=\frac{1}{\alpha j}$, we conclude from \eqref{quasiiso}, \eqref{minmax}, \eqref{third} that:
$$
\lambda_2(L_{\sigma_j})\geq C_3j^2
$$
where $C_3$ is a constant depending only on $n,\alpha$ and $R$. By taking $j$ larger than a suitable integer $j_2$ we see that $\lambda_2(L_{\sigma_j})\geq 2j$ as asserted. The conclusion is that, if $j\geq j_0\doteq\max\{j_1,j_2\}$, then the inequality of  Theorem \ref{revclosed} is verified. This ends the proof.

\bibliographystyle{plain}

\bibliography{biblioCES}
\end{document}